\documentclass[11pt]{amsart}

\usepackage{amssymb}
\usepackage{amsthm}
\usepackage{amsmath}
\usepackage{amsbsy}
\usepackage[all]{xy}
\usepackage{bm}
\usepackage{hyperref}
\usepackage{tikz}
\usepackage{array}
\usepackage{colortbl,xcolor}
\usepackage{ytableau}
\usepackage{hhline}
\allowdisplaybreaks
\usepackage[margin=1.5in]{geometry}

\newtheorem{theorem}{Theorem}[section]

\newtheorem{corollary}[theorem]{Corollary}

\newtheorem{lemma}[theorem]{Lemma}
\newtheorem{question}[theorem]{Question}
\newtheorem{problem}[theorem]{Problem}

\newcommand{\cupdot}{\mathbin{\mathaccent\cdot\cup}}

\begin{document}

\title[]{Generalized difference sets and\\ autocorrelation integrals} \keywords{Generalized difference set, autocorrelation, additive combinatorics.}
\subjclass[2010]{11B13, 11P70, 26D15, 42A85.}

\author[]{Noah Kravitz}
\address[]{Grace Hopper College, Zoom University at Yale, New Haven, CT 06510, USA}
\email{noah.kravitz@yale.edu}

\maketitle
\vspace{-0.8cm}
\begin{abstract}
In 2010, Cilleruelo, Ruzsa, and Vinuesa established a surprising connection between the maximum possible size of a generalized Sidon set in the first $N$ natural numbers and the optimal constant in an ``analogous'' problem concerning nonnegative-valued functions on $[0,1]$ with autoconvolution integral uniformly bounded above.  Answering a recent question of Barnard and Steinerberger, we prove the corresponding dual result about the minimum size of a so-called generalized difference set that covers the first $N$ natural numbers and the optimal constant in an analogous problem concerning nonnegative-valued functions on $\mathbb{R}$ with autocorrelation integral bounded below on $[0,1]$.  These results show that the correspondence of Cilleruelo, Ruzsa, and Vinuesa is representative of a more general phenomenon relating discrete problems in additive combinatorics to questions in the continuous world.
\end{abstract}

\section{Introduction}

\subsection{A tale of four problems}
Consider the following two pairs of problems.  First, a pair of additive combinatorics problems about subsets of the integers (where $[N]=\{1,2,\ldots, N\}$):
\begin{itemize}
\item[(1a)] For natural numbers $g,N$, find the maximum size of a set $A \subseteq [N]$ such that every $m \in \mathbb{Z}$ has at most $g$ solutions in $A$ to the equation $m=a_i+a_j$.  
\item[(1b)] For natural numbers $g,N$, find the minimum size of a set $A \subset \mathbb{Z}$ such that every $m \in [N]$ has at least $g$ solutions in $A$ to the equation $m=a_i-a_j$.  
\end{itemize}
Next, a pair of analysis problems about functions on the real line:
\begin{itemize}
\item[(2a)] Find the maximum $L^1$ norm of a function $f: [0,1] \to \mathbb{R}_{\geq 0}$ such that every $x \in \mathbb{R}$ satisfies $\int_{\mathbb{R}} f(t)f(x-t) \, dt \leq 1$.
\item[(2b)] Find the minimum $L^1$ norm of a function $f: \mathbb{R} \to \mathbb{R}_{\geq 0}$ such that every $x \in [0,1]$ satisfies $\int_{\mathbb{R}} f(t)f(x+t) \, dt \geq 1$.
\end{itemize}
Cilleruelo, Ruzsa, and Vinuesa showed in their groundbreaking 2010 paper \cite{cilleruelo} that the optimal constants in Problems (1a) and (2a) are closely related.  Barnard and Steinerberger \cite{barnard} asked whether there is an analogous correspondence between the optimal constants in Problems (1b) and (2b).  The main objective of this paper is to answer this ``dual'' question in the affirmative.

\subsection{Background and definitions}
We now introduce our objects of study in a precise way and survey previous work in the area.  Given a subset $A$ of an abelian group $G$, define the representation counting functions
$$q_A(x)=|\{ (a_1,a_2) \in A \times A: x=a_1+a_2 \}|$$
and
$$r_A(x)=|\{ (a_1,a_2) \in A \times A: x=a_1-a_2 \}|.$$
In this language, we say that $A \subseteq G$ is a \emph{$g$-Sidon set} if $q_A(x) \leq g$ for all $x \in G$, and  we say that $A \subseteq G$ is a \emph{$g$-difference set} if $r_A(x) \geq g$ for all $x \in G$.  In the special case where $G=\mathbb{Z}$ (as discussed above), we are interested in slightly different objects: we say that $A \subseteq [1,N]$ is a \emph{$g$-Sidon set for $[N]$} if $q_A(m) \leq g$ for all $m \in \mathbb{Z}$, and we say that $A \subseteq \mathbb{Z}$ is a \emph{$g$-difference set for $[N]$} if $r_A(m) \geq g$ for all $m \in [N]$.  We remark that a $2$-Sidon set for $[N]$ specializes to the ordinary notion of a Sidon set contained in the first $N$ natural numbers and that a $1$-difference set coincides with the classical notion of a difference set.

It is natural to ask about the extremal sizes of these generalized Sidon and difference sets.  We make the following definitions:
\begin{align*}
&\alpha_g(G)=\max\{|A|: A \subseteq G \text{ is a $g$-Sidon set}\};\\
&\beta_g(N)=\max\{|A|: A \text{ is a $g$-Sidon set for $[N]$}\};\\
&\gamma_g(G)=\min\{|A|: A \subseteq G \text{ is a $g$-difference set}\};\\
&\eta_g(N)=\min\{|A|: A \text{ is a $g$-difference set for $[N]$}\}.
\end{align*}
(Note that $\gamma_g(G)$ exists whenever $g \leq |G|<\infty$ and that $\eta_g(N) \leq 2g<\infty$.)  We record the trivial bounds
$$\alpha_g(G) \leq \sqrt{g|G|}, \quad \beta_g(N) \leq \sqrt{2gN}, \quad \gamma_g(G) \geq \sqrt{g|G|}, \quad \eta_g(N) \geq \sqrt{2gN}.$$
The authors of \cite{cilleruelo} are concerned with $\alpha_g$ and $\beta_g$, and we will be concerned mostly with $\gamma_g$ and $\eta_g$.

Obtaining nontrivial bounds on these quantities, particularly $\beta_2$ (classical Sidon sets in $\mathbb{Z}$), has been the object of considerable interest.  For more background on generalized Sidon sets and applications, we refer the reader to the description in \cite{cilleruelo2, cilleruelo} and the references therein.  Old bounds on difference sets, starting in the 1940's, include the work of R\'{e}dei and R\'{e}nyi \cite{redei}, Leech \cite{leech}, and Golay \cite{golay}.  More recent advances in this area often appear in the context of connections between generalized difference sets and other areas of combinatorics, such as block designs \cite{bose, sumner}, symmetric intersecting families \cite{ellis}, and graceful labelings of graphs \cite{graham}.

We now introduce the other leading characters of this paper, this time from the continuous world.  Recall that the \emph{convolution} integral of $f,h: \mathbb{R} \to \mathbb{R}$ is given by
$$(f*h)(x)=\int_{t \in \mathbb{R}} f(t) h(x-t) \, dt$$
and the \emph{correlation} integral is given by
$$(f \star h)(x)=\int_{t \in \mathbb{R}}f(t)h(x+t) \, dt.$$
When $f=h$, we will speak of the \emph{autoconvolution} and \emph{autocorrelation} integrals of $f$.  We may define the family of all nonnegative functions supported on $[0,1]$ whose autoconvolution integral is everywhere at most $1$ to be
$$\mathcal{E}=\{f: [0,1] \to \mathbb{R}_{\geq 0} \text{ such that $(f*f)(x) \leq 1$ for all $x \in \mathbb{R}$}\}.$$
Similarly, we may define the family of all nonnegative functions on $\mathbb{R}$ whose autocorrelation integral is at least $1$ on all of $[0,1]$ to be
$$\mathcal{F}=\{f: \mathbb{R} \to \mathbb{R}_{\geq 0} \text{ such that $(f\star f)(x) \geq 1$ for all $x \in [0,1]$}\}.$$
We now define the constants
$$\sigma=\sup_{f \in \mathcal{E}} \int_{[0,1]} f(x) \, dx \quad \text{and} \quad \tau=\inf_{f \in \mathcal{F}} \int_\mathbb{R} f(x) \, dx.$$
We have the trivial bounds $\sigma \leq \sqrt{2}$ and $\tau \geq 1$; determining the exact values of $\sigma$ and $\tau$ is of intrinsic interest and appears to be a very difficult problem.

We are finally ready to connect all of these pieces.  The following theorem of Cilleruelo, Ruzsa, and Vinuesa \cite{cilleruelo} shows exactly how $\beta_g(N)$ and $\sigma$ are related.

\begin{theorem}[\cite{cilleruelo}]\label{thm:sidon}
We have the equalities
$$\lim_{g \to \infty} \liminf_{N \to \infty} \frac{\beta_g(N)}{\sqrt{gN}}=\sigma=\lim_{g \to \infty} \limsup_{N \to \infty} \frac{\beta_g(N)}{\sqrt{gN}}.$$
\end{theorem}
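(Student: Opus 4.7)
I would prove two matching bounds: for all $g, N$, $\beta_g(N) \leq \sigma\sqrt{gN}$ (giving the $\leq$ direction in both limits); and for each $\epsilon > 0$ there exists $g_0$ such that $\beta_g(N) \geq (\sigma - \epsilon)\sqrt{gN}$ for all $g \geq g_0$ and all sufficiently large $N$ (giving the $\geq$ direction in both limits). Together these yield equality of both $\lim_{g \to \infty} \limsup_N$ and $\lim_{g \to \infty} \liminf_N$ with $\sigma$.

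\textbf{Upper bound: discrete to continuous.} Given a $g$-Sidon set $A \subseteq [N]$, I associate the step function $f_A(x) := \sqrt{N/g}\sum_{a \in A} \mathbf{1}_{I_a}(x)$, where $I_a := [(a-1)/N, a/N)$. This $f_A$ is nonnegative, supported on $[0,1]$, and has integral $|A|/\sqrt{gN}$. Since $\mathbf{1}_{I_a} * \mathbf{1}_{I_b}$ is the tent of height $1/N$ on $[(a+b-2)/N, (a+b)/N]$ peaked at $(a+b-1)/N$, a direct calculation on each subinterval $[(m-1)/N, m/N)$, parametrized as $x = (m-1+s)/N$ for $s \in [0,1)$, yields $(f_A * f_A)(x) = \tfrac{1}{g}\bigl((1-s)q_A(m) + s\,q_A(m+1)\bigr) \leq 1$ by the $g$-Sidon hypothesis. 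Hence $f_A \in \mathcal{E}$ and $\sigma \geq |A|/\sqrt{gN}$.

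\textbf{Lower bound: continuous to discrete.} Fix $\epsilon > 0$ and pick $f^* \in \mathcal{E}$ with $\int f^* \geq \sigma - \epsilon$. Truncate $f^*$ at a large level so that $f^* \in L^\infty$, then approximate from below by a step function $\bar f = \sum_{i=1}^K h_i \mathbf{1}_{[(i-1)/K, i/K)}$ on a uniform partition, with $K$ large enough that $(\bar f * \bar f) \leq 1$ and $\int \bar f \geq \sigma - 2\epsilon$. For integer parameters $g, T$ large (with $T$ much larger than $K$), set $c_i := \lfloor h_i \sqrt{gT/K} \rfloor$ and $N := KT$. The target is an actual set $A \subseteq [N]$ placing exactly $c_i$ elements in each block $((i-1)T, iT] \cap \mathbb{Z}$, so that $|A| = \sum c_i \approx \sqrt{gN}\int \bar f \geq (\sigma - 3\epsilon)\sqrt{gN}$ while $q_A(m) \leq g$ for every $m \in \mathbb{Z}$. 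Handling arbitrary large $N$ is then routine by taking $T = \lfloor N/K \rfloor$ and absorbing the $O(1/T)$ loss.

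\textbf{Main obstacle.} The hard step is the construction of $A$: summing the expected autoconvolution contributions from the appropriate block-pairs matches $(\bar f * \bar f) \leq 1$, so the \emph{average} $q$-count per block is already under $g$; the real work is tight pointwise control. Two natural routes: (a) a probabilistic construction, selecting each block's $c_i$ elements independently uniformly at random and applying Chernoff-type concentration plus a union bound over $m \leq 2N$ to force every $q_A(m) \leq g(1+o(1))$ simultaneously; (b) a deterministic template, placing each block's elements at translates of a perfect Sidon subset of $[T]$, so that within-block pairings are controlled by the Sidon property of the template and between-block pairings reflect the autoconvolution bound on $\bar f$. Either approach, after absorbing a $1+o(1)$ slack into $g$, produces $g$-Sidon sets of size $(\sigma - O(\epsilon))\sqrt{gN}$ in every sufficiently large $[N]$, which secures the desired lower bound on both $\liminf_N$ and $\limsup_N$ and completes the proof.
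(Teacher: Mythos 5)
Your upper-bound step (the step-function construction sending a $g$-Sidon set for $[N]$ to $f_A\in\mathcal{E}$ with $\int f_A=|A|/\sqrt{gN}$) is correct and is exactly the ``easy direction'' of Cilleruelo--Ruzsa--Vinuesa, mirrored in this paper by Theorem~\ref{thm:easy-direction}. The gap is in the lower bound, which is where all the work lies, and the two routes you sketch for the ``main obstacle'' do not close it. Route (a) fails quantitatively for the order of limits in the statement: the theorem requires, for each fixed large $g$, good $g$-Sidon sets for \emph{every} sufficiently large $N$, so the union bound runs over roughly $2N$ values of $m$ while $g$ stays fixed. A Chernoff bound for $q_A(m)$, whose mean is of order $g$, only gives a per-$m$ failure probability of order $e^{-c\varepsilon^2 g}$, a constant independent of $N$; multiplied by $N$ this diverges. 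The probabilistic construction genuinely works only at a special scale where $g$ grows with $N$ (in this paper's dual argument, Theorem~\ref{thm:sequence-to-set} takes $g\approx N^{1/3}$, so that $Ne^{-cN^{1/3}}\to 0$). Route (b), placing translates of a Sidon template in each block, is too vague to carry the pointwise control you correctly identify as the crux: nothing in that description prevents cross-block coincidences from piling up on particular values of $m$, and ``absorbing a $1+o(1)$ slack into $g$'' presupposes exactly the uniform bound you have not established (recall also that in the Sidon setting the bound $q_A(m)\le g$ must hold with no exceptional $m$ at all).

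What is missing is the actual architecture of the CRV proof, which this paper reproduces in the dual setting: (i) run the probabilistic transfer from the near-extremal function only at the scale $N_0\approx g_1^3$, where concentration beats the union bound (Section~\ref{sec:functions-to-sets}); (ii) construct, by algebraic/character-sum methods, near-optimal generalized Sidon (resp.\ difference) sets in $(\mathbb{Z}/p\mathbb{Z})^2$ and then in cyclic groups $\mathbb{Z}/p^2s\mathbb{Z}$ (Section~\ref{sec:finite}); and (iii) compose the two by a blow-up lemma (the analogue of Lemma~\ref{lem:blow-up}), which turns a $g_1$-set for $[N_0]$ and a $g_2$-set in $\mathbb{Z}/q\mathbb{Z}$ into a $g_1g_2$-set for $[qN_0]$, with the Prime Number Theorem supplying enough admissible moduli $q$ that every large $N$ is reached with only a $1+o(1)$ loss in density (as in Theorem~\ref{thm:final-steps}). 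Without steps (ii) and (iii) your plan cannot produce, for a fixed large $g$, sets of size $(\sigma-O(\varepsilon))\sqrt{gN}$ for \emph{all} sufficiently large $N$, so the stated $\liminf_N$ and $\limsup_N$ bounds are not secured.
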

Our main result is the analogous statement for $\eta_g(N)$ and $\tau$.
\begin{theorem}[Main Theorem]\label{thm:main}
We have the equalities
$$\lim_{g \to \infty} \liminf_{N \to \infty} \frac{\eta_g(N)}{\sqrt{gN}}=\tau=\lim_{g \to \infty} \limsup_{N \to \infty} \frac{\eta_g(N)}{\sqrt{gN}}.$$
\end{theorem}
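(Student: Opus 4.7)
The plan is to mirror the two-directional strategy of Cilleruelo--Ruzsa--Vinuesa for Theorem~\ref{thm:sidon} with all inequalities transposed: first convert any $g$-difference set into a member of $\mathcal{F}$ to get $\tau \leq \lim_{g\to\infty} \liminf_{N\to\infty} \eta_g(N)/\sqrt{gN}$, then convert any near-optimal $f\in\mathcal{F}$ into a $g$-difference set to get $\lim_{g\to\infty} \limsup_{N\to\infty} \eta_g(N)/\sqrt{gN} \leq \tau$. Since $\liminf \leq \limsup$ trivially, these two inequalities pin everything to $\tau$.

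For the first direction, given a $g$-difference set $A \subseteq \mathbb{Z}_{\geq 0}$ for $[N]$ of size $\eta_g(N)$, I would form the step function $f(x) = \sqrt{N/g}\sum_{a \in A} \mathbf{1}_{[a/N,(a+1)/N]}(x)$. The correlation of two unit-length indicator functions of intervals $[a/N,(a+1)/N]$ and $[b/N,(b+1)/N]$ at $x$ equals $\max(0, 1/N - |(b-a)/N - x|)$, so summing and writing $u = Nx$, $k = \lfloor u \rfloor$, $\theta = u - k$ yields
\[
(f\star f)(x) = \tfrac{1}{g}\bigl[(1-\theta)\,r_A(k) + \theta\, r_A(k+1)\bigr].
\]
For $x \in [0,1]$ this is $\geq 1$: both $r_A(k)$ and $r_A(k+1)$ are $\geq g$ when $k \geq 1$ by hypothesis, and the boundary case $k=0$ uses $r_A(0) = |A| \geq \sqrt{2gN} \geq g$ (valid as soon as $N \geq g/2$). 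Hence $f \in \mathcal{F}$, and $\|f\|_1 = |A|/\sqrt{gN}$ gives $\tau \leq \eta_g(N)/\sqrt{gN}$, from which the desired lower bound follows after taking $\liminf_N$ and then $\lim_g$.

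For the second direction, fix $\epsilon, \delta > 0$ and (after a routine truncation/mollification that lets us assume $f$ is bounded and supported on some $[0,L]$, rescaling slightly to restore the autocorrelation lower bound) choose $f \in \mathcal{F}$ with $\|f\|_1 < \tau + \epsilon$. For $g,N$ large enough that $(1+\delta)\sqrt{g/N}\,\|f\|_\infty \leq 1$, sample independent Bernoulli variables $\xi_k \sim \mathrm{Ber}\bigl((1+\delta)\sqrt{g/N}\,f(k/N)\bigr)$ for $k = 0, \ldots, \lfloor LN\rfloor$, and set $A = \{k : \xi_k = 1\}$. Standard Riemann-sum estimates give $\mathbb{E}|A| \sim (1+\delta)(\tau+\epsilon)\sqrt{gN}$ and, using $(f\star f)(m/N) \geq 1$ for $m \in [N]$, $\mathbb{E}\,r_A(m) \sim (1+\delta)^2 g$ uniformly in $m$.

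The main obstacle is the concentration step: the condition $r_A(m) \geq g$ must hold \emph{simultaneously} for all $m \in [N]$, and a naive union bound over this many quadratic forms is wasteful. The key structural observation is that for each fixed $m$, the terms $\xi_k\xi_{k-m}$ in $r_A(m) = \sum_k \xi_k\xi_{k-m}$ decompose into $2m$ residue classes modulo $2m$ within each of which they are mutually independent, so each partial sum admits a sharp Chernoff tail bound. The remaining cases are treated by an alterations step: estimate the expected size of the bad set $B = \{m \in [N] : r_A(m) < g\}$, and for each $m \in B$ adjoin $g - r_A(m)$ additional elements forming new pairs with difference $m$; a careful calculation shows the total number of repair elements is $o(\sqrt{gN})$ in the joint growth regime of interest. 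Sending $\delta, \epsilon \to 0$ then yields $\lim_g \limsup_N \eta_g(N)/\sqrt{gN} \leq \tau$, which, together with the first direction, forces both one-sided limits to equal $\tau$.
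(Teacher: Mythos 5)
Your first direction is exactly the paper's Theorem~\ref{thm:easy-direction} (the same step-function construction with the same piecewise-linear interpolation of $r_A(j)/g$), and that part is fine. The problem is in the second direction, and it is not a technicality: the quantity to be bounded is $\limsup_{N\to\infty}\eta_g(N)/\sqrt{gN}$ \emph{at fixed} $g$, so the regime you must handle is $g$ fixed (large) and $N\to\infty$. In that regime your random model has $\mathbb{E}\,r_A(m)\approx(1+\delta)^2 g=O(1)$ while there are $N\to\infty$ constraints $m\in[N]$ to satisfy simultaneously. No decomposition rescues this: the per-$m$ failure probability $\mathbb{P}(r_A(m)<g)$ is a constant of order $e^{-c\delta^2 g}$, independent of $N$, so the expected size of your bad set $B$ is $\Theta(N e^{-c\delta^2 g})$, and the repair cost in your alterations step is at least of this order --- which is $\gg\sqrt{gN}$ for fixed $g$ as $N\to\infty$, not $o(\sqrt{gN})$. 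Your phrase ``in the joint growth regime of interest'' gives the game away: the sampling-plus-union-bound (or alteration) argument only closes when $g\gtrsim\log N$, i.e., when $g$ grows with $N$, and that establishes control of $\eta_{g(N)}(N)$ along a coupled sequence, not of $\limsup_{N\to\infty}\eta_g(N)/\sqrt{gN}$ for each fixed large $g$. (Separately, your decomposition into $2m$ residue classes mod $2m$ is too fine to use: each class has expectation of order $g/m$, so for $m$ comparable to $N$ the individual Chernoff bounds are vacuous; the paper's version of this step, Theorem~\ref{thm:sequence-to-set}, splits into just two classes, residues $1,\dots,m$ versus $m+1,\dots,2m$ mod $2m$, which keeps the per-class means of order $g$.)

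The missing ingredient is precisely what occupies Sections~\ref{sec:finite} and \ref{sec:everything} of the paper: the probabilistic construction is only run at the special scale $g_1\sim\nu^{1/3}$ (where the union bound over $m\in[\nu]$ does close), and arbitrary large $N$ at essentially the same $g$ is then reached by the blow-up Lemma~\ref{lem:blow-up}, which multiplies a $g_1$-difference set for $[\nu]$ by a near-optimal $g_2$-difference set in $\mathbb{Z}/p^2s\mathbb{Z}$; the latter sets come from the algebraic (quadratic-residue/Weil bound) construction of Corollary~\ref{cor:cyclic}, and the Prime Number Theorem guarantees the admissible moduli $p^2 s$ are dense enough that every large $N$ is approximated within a factor $1+o(1)$. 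Without this decoupling mechanism (or some substitute for it), your argument does not prove the inequality $\lim_{g}\limsup_{N}\eta_g(N)/\sqrt{gN}\le\tau$. A smaller point: the ``routine truncation/mollification'' reducing to bounded, compactly supported $f$ needs the mollify-then-truncate order to control the tail of the autocorrelation uniformly in $x$; the paper sidesteps this entirely by taking local averages of the (possibly unbounded, non-compactly-supported) extremizer in Lemma~\ref{lem:function-to-sequence} and proving an infinite-sum Chernoff bound.
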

When it appeared in 2010, Theorem~\ref{thm:sidon} was an isolated result in the literature.  Our Main Theorem shows that this discrete-continuous connection is in fact an example of a more general phenomenon in additive combinatorics and analysis.

We emphasize that it is not known in general whether or not the limits
$$\lim_{N \to \infty} \frac{\beta_g(N)}{\sqrt{N}} \quad \text{and} \quad \lim_{N \to \infty} \frac{\eta_g(N)}{\sqrt{N}}$$
exist for all values of $g$ (although it seems very likely that they do); in this sense the two above theorems are the best possible given current machinery.

We conclude this section by recording the state-of-the-art bounds for these two (pairs of) problems.  Because of the two theorems above, any improvement on either the discrete side or the continuous side immediately gives a corresponding improvement in the other area.  For the generalized Sidon set context, we know that
$$1.147\ldots \leq \sigma \leq 1.252\ldots$$ (lower bound due to Cloninger and Steinerberger \cite{cloninger}, from the continuous world; upper bound due to Matolcsi and Vinuesa \cite{matolcsi}, from the discrete world).  See also \cite{cilleruelo2, cilleruelo, green, kolou, martin}.  For the generalized difference set context, we are not aware of previous work on $\eta_g(N)$ for $g>2$.  For $g=2$, the current best bounds are (for $N$ sufficiently large)
$$\sqrt{2.435n} \leq \eta_g(N) \leq \sqrt{2.645n}$$
(lower bound due to Bernshteyn and Tait \cite{bernshteyn}; upper bound due to Golay \cite{golay}.)  The continuous analog has received more attention, and we know that
$$1.560\ldots<\tau \leq 1.643\ldots$$
(lower bound due to Madrid and Ramos \cite{madrid}; upper bound due to Barnard and Steinerberger \cite{barnard}).  See also the discussion in \cite{fish}.

\subsection{Structure of the paper}

The remainder of this paper could be summarized in a single sentence as follows: \emph{Mutatis mutandis}, the techniques of Cilleruelo, Ruzsa, and Vinuesa \cite{cilleruelo} transfer to the setting of difference sets.  But some of the \emph{mutanda} are quite delicate, and it is worth working through the details carefully.  In Section~\ref{sec:finite}, we use number-thoretic arguments from \cite{cilleruelo} to construct small $g$-difference sets in cyclic groups of certain orders; one consequence of our construction is that
$$\liminf_{N \to \infty} \frac{\gamma_g(\mathbb{Z}/N\mathbb{Z})}{\sqrt{Ng}}=1+O\left(g^{-2/5} \right)=1+o(1)$$
(where the asymptotic notation is with respect to $g$).  In Section~\ref{sec:sets-to-functions}, we prove that
$$\frac{\eta_g(N)}{\sqrt{gN}} \geq \tau$$
for all $g, N$.  In Sections~\ref{sec:functions-to-sets} and \ref{sec:everything}, we prove that this inequality is asymptotically tight for $g$ large and $N$ large relative to $g$, which completes the proof of the Main Theorem.  In Section~\ref{sec:random}, we show that almost all random subsets of sufficiently large finite abelian groups are ``good'' $g$-difference sets  (for which result even the generalized Sidon set analog was not previously known).  In Section~\ref{sec:conclusion}, we discuss some consequences of our results and raise a few open questions for future inquiry.

\section{Constructions in certain finite groups}\label{sec:finite}
The aim of this section is to show that
$$\liminf_{N \to \infty}\frac{\gamma_g(\mathbb{Z}/N\mathbb{Z})}{\sqrt{N}}=\sqrt{g}(1+o(1)).$$

Our first result says that the trivial lower bound $\gamma_g(G) \geq \sqrt{g|G|}$ is nearly sharp for $G=(\mathbb{Z}/p\mathbb{Z})^2$ and some particular values of $g$.  The argument is a modification of Theorem~3.1 of Cilleruelo, Ruzsa, and Vinuesa \cite{cilleruelo}.

\begin{theorem}
Fix any positive integer $k$.  For every sufficiently large prime $p$, there exists a $g$-difference set $A \subseteq (\mathbb{Z}/p\mathbb{Z})^2$ of size $|A|=k(p-1)+1$ with $$g= \left\lceil k^2-2(k-1)-2k^{3/2} \right\rceil=k^2+O(k^{3/2}).$$
\end{theorem}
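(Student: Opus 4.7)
The plan is to adapt the construction from the proof of Theorem~3.1 in \cite{cilleruelo}, where a large generalized Sidon set in $(\mathbb{Z}/p\mathbb{Z})^2$ is realized as the union of $k$ algebraic curves sharing a common basepoint. A concrete candidate is
$$A = \{(0,0)\} \cup \bigcup_{i=1}^{k} C_i, \qquad C_i = \{(x, c_i x^2) : x \in \mathbb{F}_p^*\},$$
for distinct scalars $c_1, \ldots, c_k \in \mathbb{F}_p^*$, so that the curves $C_i$ are pairwise disjoint and $|A| = k(p-1)+1$. The essential modification for the difference-set setting is to prove a uniform \emph{lower} bound on $r_A(m,n)$ rather than an upper bound on $q_A(m,n)$ as treated in \cite{cilleruelo}.

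For each $(m,n) \in (\mathbb{Z}/p\mathbb{Z})^2$ I would decompose $r_A(m,n) = |A \cap (A + (m,n))|$ via the partition of $A$ into $\{(0,0)\}$ and the $C_i$. Since the boundary terms coming from the basepoint are nonnegative, it suffices to lower bound $\sum_{i,j=1}^{k} |C_i \cap (C_j + (m,n))|$. Unpacking each intersection, the diagonal pairs $i = j$ contribute (generically) exactly one solution apiece---the unique $x$ solving the linear equation $c_i(x^2 - (x-m)^2) = n$ for $m \neq 0$---while each off-diagonal pair $(i,j)$ contributes $1 + \chi(D_{ij}(m,n))$ solutions, with $\chi$ the Legendre symbol on $\mathbb{F}_p^*$ and $D_{ij}(m,n)$ the discriminant of the resulting quadratic in $x$. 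Collecting terms gives the target estimate
$$r_A(m,n) \geq k^2 + \sum_{i \neq j} \chi(D_{ij}(m,n)) - 2(k-1),$$
where the $-2(k-1)$ records the solutions with $x \in \{0,m\}$ that must be excluded from the count.

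The heart of the argument is then to bound the character sum $\sum_{i \neq j} \chi(D_{ij}(m,n))$ uniformly by $2k^{3/2}$ in absolute value, for every $(m,n) \in (\mathbb{Z}/p\mathbb{Z})^2$. Here I would choose the scalars $c_i$ to be a structured subset of $\mathbb{F}_p^*$ and apply Weil's bound on character sums over $\mathbb{F}_p$, with the $\sqrt{k}$ improvement over the trivial bound $k(k-1)$ arising from square-root cancellation in the double sum. The main obstacle is verifying that this cancellation is genuinely uniform in $(m,n)$, including degenerate inputs where the quadratic collapses (e.g.\ $m=0$) or $D_{ij}(m,n) = 0$; in such cases, however, the positive contributions from the basepoint $(0,0) \in A$ and from the "resonant" intersections along the curves $C_i$ more than compensate, ultimately delivering the uniform bound $r_A(m,n) \geq \lceil k^2 - 2(k-1) - 2k^{3/2}\rceil$ throughout $(\mathbb{Z}/p\mathbb{Z})^2$ for $p$ sufficiently large relative to $k$.
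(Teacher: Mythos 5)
Your construction is the same as the paper's (the set $A$ is a union of $k$ parabolas $y=c_ix^2$ through the origin, which is just the reparametrization $c=1/u$ of the curves $y=x^2/u$ used there), and your reduction of $r_A(m,n)$ to $k^2-2(k-1)$ plus a signed sum of Legendre symbols of discriminants is also the right first step. The gap is in what you call the heart of the argument: you propose to bound $\bigl|\sum_{i\neq j}\chi(D_{ij}(m,n))\bigr|\leq 2k^{3/2}$ \emph{uniformly in $(m,n)$} by choosing the $c_i$ to be some fixed structured set and ``applying Weil's bound,'' but Weil's theorem gives cancellation for complete character sums in a variable ranging over all of $\mathbb{F}_p$ (or over the points of a curve), whereas your sum has only $k^2$ terms indexed by parameters $c_i$ lying in a tiny fixed subset, with $k$ fixed and $p\to\infty$; there is no complete variable to sum over, so Weil yields nothing directly, and worst-case-over-$(m,n)$ square-root cancellation for a single prescribed choice of $c_i$ is precisely the unproved assertion. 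The appeal to the basepoint and ``resonant'' intersections to absorb degenerate cases does not address this, since the quantity to be controlled is of size up to $k(k-1)$ in the worst case and must be beaten down to $O(k^{3/2})$ for \emph{every} $(m,n)$.

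The paper supplies two ideas that your outline is missing. First, taking the parameters to be a block of consecutive values $u\in\{t+1,\dots,t+k\}$ (so $c=1/u$), the discriminant factors as $\Delta_{u,v}=4uv\bigl(a^2-b(u-v)\bigr)$, whose $(a,b)$-dependent factor depends only on the difference $u-v$; pairing two pairs $(u,v)$, $(u',v')$ with $u-v=u'-v'$ and $\chi(uvu'v')=-1$ forces $r_{u,v}+r_{u',v'}=2$ exactly, so the dependence on the target element disappears and the loss is controlled by $S_t=\sum_{|\ell|\leq k-1}\bigl|\sum_{i-j=\ell}\chi((t+i)(t+j))\bigr|$, a quantity independent of $(m,n)$. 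Second, even this $(m,n)$-free bound is not proved for a fixed structured choice: the inner sums are short, incomplete character sums (length $\leq k\ll p$), for which no square-root-cancellation bound is available. Instead one averages over the shift $t$: Cauchy--Schwarz in $t$ converts the problem into complete sums $\sum_{t\in\mathbb{F}_p}\chi\bigl((t+i)(t+j)(t+i')(t+j')\bigr)$, where Weil does apply, giving $\sum_t S_t\leq\sqrt{2p^2k^2(2k-1)+8p^{3/2}k^4}$ and hence the existence of \emph{some} $t$ with $S_t<2k^{3/2}$ for $p$ large. Without the pairing identity and the averaging over $t$, your plan as stated cannot be completed.
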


\begin{proof}
First, for each nonzero element $u \in \mathbb{Z}/p\mathbb{Z}$, define the subset
$$A_u=\left\{ \left(x, x^2/u \right): x \in \mathbb{Z}/p\mathbb{Z} \right\} \subset (\mathbb{Z}/p\mathbb{Z})^2.$$
Given $\underline{a} \in (\mathbb{Z}/p\mathbb{Z})^2$, let $$r_{u,v}(\underline{a})=|\{(\underline{a_1}, \underline{a_2}) \in A_u \times A_v: \underline{a}=\underline{a_1}-\underline{a_2}\}|$$
denote the number of representations of $\underline{a}$ as the difference of an element of $A_u$ and an element of $A_v$.

We now claim that if $u-v=u'-v'$ and $\left( \frac{uvu'v'}{p} \right)=-1$ (i.e., $uvu'v'$ is a nonzero quadratic non-residue modulo $p$), then every $\underline{a} \in (\mathbb{Z}/p\mathbb{Z})^2$ satisfies $$r_{u,v}(\underline{a})+r_{u',v'}(\underline{a})=2.$$
To see that this is the case, fix some $\underline{a}=(a,b)$.  Then $r_{u,v}(\underline{a})$ counts the pairs $(x,y) \in (\mathbb{Z}/p\mathbb{Z})^2$ satisfying $$x-y=a \quad \text{and} \quad \frac{x^2}{u}-\frac{y^2}{v}=b.$$
Substituting $x=y+a$ into the second equation and clearing denominators gives
$$v(y+a)^2-u(y^2)=buv,$$
i.e.,
$$(v-u)y^2+(2av)y+(a^2v-buv)=0.$$
Note that we have $u \neq v$, since $u=v$ would imply $u'=v'$, whence $uvu'v'$ would be a quadratic residue.  So this equation is in fact a quadratic, and its discriminant is $$\Delta=4uv(a^2-b(u-v));$$ the number of solutions for $y$ is $$r_{u,v}(\underline{a})=1+\left( \frac{\Delta}{p} \right).$$
Likewise, $r_{u',v'}(\underline{a})=1+\left( \frac{\Delta'}{p} \right)$, where $\Delta'=4u'v'(a^2-b(u'-v'))$, and we wish to show that $\left( \frac{\Delta}{p} \right)+\left( \frac{\Delta'}{p} \right)=0$.  Using $u-v=u'-v'$ and $\left( \frac{uvu'v'}{p} \right)=-1$, we compute:
\begin{align*}
\left( \frac{\Delta}{p} \right) \left( \frac{\Delta'}{p} \right) &=\left( \frac{16}{p} \right) \left( \frac{uvu'v'}{p} \right) \left( \frac{(a^2-b(u-v))(a^2-b(u'-v'))}{p} \right)\\
 &=-\left( \frac{(a^2-b(u-v))^2}{p} \right).
\end{align*}
If $a^2-b(u-v)=0$, then $\left( \frac{\Delta}{p} \right)=\left( \frac{\Delta'}{p} \right)=0$, which gives the desired sum; otherwise, $\left\{\left( \frac{\Delta}{p} \right), \left( \frac{\Delta'}{p} \right) \right\}=\{-1,1\}$ also gives the desired sum.  This establishes the claim.

Next, we will identify a choice of $0 \leq t \leq p-k-1$ such that
$$A=\bigcup_{u=t+1}^{t+k} A_u$$
satisfies the conditions of the theorem.  Note that since the $A_u$'s are pairwise disjoint except for their common element $(0,0)$, we have
$$|A|=k(p-1)+1.$$
It is immediate that $r_A(0,0)=|A|$.  Also, every $\underline{a} \neq (0,0)$ satisfies
$$r_A(\underline{a}) \geq \left( \sum_{t+1 \leq u,v \leq t+k} r_{u,v}(\underline{a}) \right)-2(k-1),$$
where the second term corrects for over-counting differences involving $(0,0)$.

Write $u=t+i$ and $v=t+j$, where $i,j$ range from $1$ to $k$.  For each integer $-(k-1) \leq \ell \leq k-1$, consider the $k-|\ell|$ pairs $(i,j)$ with $i-j=\ell$.  Among these pairs, some $n$ of them have $\left( \frac{uv}{p} \right)=1$, and the remaining $k-|\ell|-n$ of them have $\left( \frac{uv}{p} \right)=-1$.  So we can make $\min\{n, k-|\ell|-n\}$ disjoint pairs of pairs (not to be confused with pairs of disjoint pairs) $((i,j),(i',j'))$ each with $\left( \frac{uvu'v'}{p} \right)=-1$.  Now, the claim above gives the estimate
\begin{align*}
\sum_{i-j=\ell} r_{u,v}(\underline{a}) &\geq 2 \min\{n, k-|\ell|-n\}\\
 &=(n+(k-|\ell|-n))-|n-(k-|\ell|-n)|\\
 &=(k-|\ell|)-\left| \sum_{i-j=\ell} \left( \frac{uv}{p} \right) \right|.
\end{align*}
Summing over all values of $\ell$ (and including the original error term) gives
$$r_A(\underline{a}) \geq -2(k-1)+k^2-\sum_{|\ell| \leq k-1} \left| \sum_{i-j=\ell} \left( \frac{(t+i)(t+j)}{p} \right) \right|.$$
For convenience, set
$$S_t=\sum_{|\ell| \leq k-1} \left| \sum_{i-j=\ell} \left( \frac{(t+i)(t+j)}{p} \right) \right|;$$
we will show that this is small on average.  The Cauchy-Schwarz Inequality gives:
\begin{align*}
\sum_{t=0}^{p-1}S_t &=\sum_{t=0}^{p-1} \sum_{|\ell| \leq k-1} \left| \sum_{i-j=\ell} \left( \frac{(t+i)(t+j)}{p} \right) \right|\\
 & \leq \sqrt{p(2k-1) \sum_{t,\ell} \left( \sum_{i-j=\ell} \left( \frac{(t+i)(t+j)}{p} \right) \right)^2}\\
 &\leq \sqrt{2pk \sum_{i-j=i'-j'} \sum_{t} \left( \frac{(t+i)(t+j)(t+i')(t+j')}{p} \right)}\\
 &=\sqrt{2pk \sum_{i+j'=i'+j} \sum_{t} \left( \frac{(t+i)(t+j)(t+i')(t+j')}{p} \right)}
\end{align*}
Cilleruelo, Ruzsa, and Vinuesa \cite{cilleruelo} use Weil character sum bounds to show that this quantity is at most
$$\sqrt{2p^2k^2(2k-1)+8p^{3/2}k^4},$$
whence we conclude that for every sufficiently large value of $p$, there is a choice of $0 \leq t \leq p-k-1$ such that
$$S_t<2k^{3/2}.$$
This immediately gives
$$r_A(\underline{a})>k^2-2(k-1)-2k^{3/2},$$
as desired.
\end{proof}

Next, we show how to construct $g$-difference sets in $\mathbb{Z}/p^2s\mathbb{Z}$ using sets in $(\mathbb{Z}/p\mathbb{Z})^2$.
\begin{lemma}\label{lem:prime-to-cyclic}
Let $A \subseteq (\mathbb{Z}/p\mathbb{Z})^2$ be a $g$-difference set of size $|A|=m$.  Then for every positive integer $s$, there is a $g(s-1)$-difference set $C \subseteq \mathbb{Z}/p^2s\mathbb{Z}$ of size $|C|=ms$.
\end{lemma}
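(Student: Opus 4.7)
The plan is to construct $C$ as a set-theoretic lift of $A$ into an initial interval of $\mathbb Z/p^2s\mathbb Z$, thickened along the canonical order-$s$ subgroup.

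First I would fix the bijection $\phi\colon(\mathbb Z/p\mathbb Z)^2\to\{0,1,\ldots,p^2-1\}$ given by $\phi(x,y)=\tilde x+p\tilde y$ using canonical lifts $\tilde x,\tilde y\in\{0,\ldots,p-1\}$, view $\phi(A)$ as a subset of $\mathbb Z/p^2s\mathbb Z$, and let $H=\langle p^2\rangle$ be the unique subgroup of order $s$. Set $C=\phi(A)+H$. The translates $\phi(A)+h$ for the $s$ values $h\in H=\{0,p^2,\ldots,(s-1)p^2\}$ occupy pairwise disjoint intervals of length $p^2$, so $|C|=ms$.

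Next, each element of $C$ factors uniquely as $b+h$ with $b\in\phi(A)$, $h\in H$. Because $H$ is a subgroup of order $s$, the inner count of $(h_1,h_2)\in H^2$ with $h_1-h_2=c-(b_1-b_2)$ equals $s$ precisely when $b_1-b_2\equiv c\pmod{p^2}$ (and zero otherwise), giving
\[r_C(c)=s\cdot M(c),\qquad M(c):=\bigl|\{(a_1,a_2)\in A^2:\phi(a_1)-\phi(a_2)\equiv c\pmod{p^2}\}\bigr|.\]
It therefore suffices to establish $M(c)\geq g$ for every $c\in\mathbb Z/p^2\mathbb Z$, since this would give $r_C(c)\geq sg\geq g(s-1)$ at once.

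Writing $c\bmod p^2=c_0+pc_1$ with $c_0,c_1\in\{0,\ldots,p-1\}$, a careful case analysis on the sign of $\tilde x_1-\tilde x_2$ (the ``$x$-carry'' when reducing $(\tilde x_1-\tilde x_2)+p(\tilde y_1-\tilde y_2)$ modulo $p^2$) yields the key identity
\[M(c)=A^+_{c_0,c_1}+A^-_{c_0,c_1+1\bmod p},\]
where $A^+_{\alpha,\beta}$ (resp.\ $A^-_{\alpha,\beta}$) counts pairs $(a_1,a_2)\in A^2$ with $\bar a_1-\bar a_2=(\alpha,\beta)$ in $(\mathbb Z/p\mathbb Z)^2$ and $\tilde x_1\geq\tilde x_2$ (resp.\ $\tilde x_1<\tilde x_2$). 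When $c_0=0$, no carry is possible and $M(c)=r_A((0,c_1))\geq g$ immediately. When $c_0\neq 0$, one combines the decompositions $r_A((c_0,c_1))=A^+_{c_0,c_1}+A^-_{c_0,c_1}\geq g$ and $r_A((c_0,c_1+1))=A^+_{c_0,c_1+1}+A^-_{c_0,c_1+1}\geq g$ with an $x$-coordinate bookkeeping argument to bound the ``wrong-carry'' remainder $A^-_{c_0,c_1}+A^+_{c_0,c_1+1}$ from above by $r_A((c_0,c_1))+r_A((c_0,c_1+1))-g$.

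The main obstacle is precisely this last bookkeeping step for $c_0\neq 0$: the split of each difference class between the carry and no-carry sub-classes is \emph{a priori} uncontrolled by $r_A\geq g$ alone, so one has to leverage both difference classes $(c_0,c_1)$ and $(c_0,c_1+1\bmod p)$ in tandem. The cleanest route is to interpret $M(c)$ as the overlap $|A\cap\psi_c(A)|$, where $\psi_c$ is the ``twisted translation'' of $(\mathbb Z/p\mathbb Z)^2$ that shifts the low-$x$ half (elements with $\tilde x\leq p-1-c_0$) by $(c_0,c_1)$ and the high-$x$ half by $(c_0,c_1+1)$. Because $\psi_c$ is a bijection that maps low-$x$ to high-$x'$ and vice versa, a partition argument on the vertical strips $\{j\}\times\mathbb Z/p\mathbb Z$ reduces the desired bound on $|A\cap\psi_c(A)|$ to the $g$-difference property of $A$ applied separately on each strip, completing the proof.
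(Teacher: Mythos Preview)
Your construction is not the paper's, and the target inequality you set yourself---$M(c)\geq g$ for every $c\in\mathbb{Z}/p^2\mathbb{Z}$---is simply false, so no amount of bookkeeping or strip-partitioning can rescue it.  Take $p=3$, $g=1$, and $A=\{(0,0),(1,0),(2,1),(0,2)\}\subset(\mathbb{Z}/3\mathbb{Z})^2$; one checks directly that $A-A=(\mathbb{Z}/3\mathbb{Z})^2$, so $A$ is a $1$-difference set.  But $\phi(A)=\{0,1,5,6\}\subset\mathbb{Z}/9\mathbb{Z}$ has difference set $\{0,1,3,4,5,6,8\}$, missing $2$ and $7$, so $M(2)=M(7)=0$.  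In your notation this is exactly the bad scenario: the unique representation of $(2,0)$ lies in $A^{-}_{2,0}$ and the unique representation of $(2,1)$ lies in $A^{+}_{2,1}$, so $M(2)=A^{+}_{2,0}+A^{-}_{2,1}=0$.  Consequently your $C=\phi(A)+\langle p^2\rangle$ is not even a $1$-difference set in $\mathbb{Z}/9s\mathbb{Z}$ for any $s$.

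The paper's construction threads the thickening through the \emph{middle} digit rather than the top one: it takes $C=\{a+cp+bsp:(a,b)\in A,\ 0\leq c\leq s-1\}$, so that the blow-up parameter $c$ sits between the two coordinates of $A$.  The point is that the set $\{cp:0\leq c\leq s-1\}$ is \emph{not} a subgroup, so one does not get your clean factorisation $r_C=s\cdot M$.  Instead, for a target $x=a+cp+bsp$ one draws on representations of \emph{both} $(a,b)$ and $(a,b+1)$ in $A$: the first class contributes at least $g(s-c-1)$ solutions (via choices of $c_1\in\{0,\ldots,s-c-1\}$ for the subtrahend), the second at least $gc$ (via $c_1\in\{1,\ldots,c\}$), and the carries in the $a$-coordinate are absorbed by shifting $c_1$ by one.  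The sum is $g(s-1)$, which is exactly the content of the lemma and explains why $s-1$ rather than $s$ appears.  Your top-digit thickening collapses this interpolation and forces the whole burden onto a single residue class mod $p^2$, which the hypothesis on $A$ does not control.
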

\begin{proof}
We describe the elements of one such set $C$ as follows: for each element of $A$, take a representative $(a,b)$ with $0 \leq a,b \leq p-1$, and then let $C$ include all elements of the form $a+cp+bsp$, for $0 \leq c \leq s-1$.  Note that $|C|=ms$.

Now, we wish to bound $r_{C}(x)$ from below for arbitrary $x \in \mathbb{Z}/p^2s\mathbb{Z}$.  Each such $x$ is (uniquely) expressible as
$$x=a+cp+bsp,$$
where $0 \leq a,b \leq p-1$ and $0 \leq c \leq s-1$.  So we need to find solutions (modulo $p^2s$, of course) to the equation
$$a+cp+bsp=(a_1+c_1p+b_1sp)-(a_2+c_2p+b_2sp),$$
where $(a_1,b_1), (a_2,b_2) \in A$ (with $0 \leq a_1,b_1,a_2,b_2 \leq p-1$) and $0 \leq c_1,c_2 \leq s-1$.

By assumption, there exist at least $g$ solutions to $$(a_1,b_1)-(a_2,b_2)=(a,b),$$
with $(a_1,b_1), (a_2,b_2) \in A$.  For each such solution, take representatives $0 \leq a_1,b_1,a_2,b_2 \leq p-1$.  Note that either $a_1-a_2=a$ or $a_1-a_2=a-p$.  If $a_1-a_2=a$, then we have
$$(a_1+(c+c_1)p+b_1sp)-(a_2+c_1p+b_2sp)=a+cp+bsp$$
for each of the $s-c$ choices of $0 \leq c_1 \leq s-1-c$.  If $a_1-a_2=a-p$, then we have
$$(a_1+(c+c_1+1)p+b_1sp)-(a_2+c_1p+b_2sp)=a+cp+bsp$$
for each of the $s-c-1$ choices of $0 \leq c_1 \leq s-2-c$.  So over all of these solutions, we obtain at least $g(s-c-1)$ such expressions of $a+cp+bsp$ as a difference.

Likewise, the assumption on $A$ tells us that there exist at least $g$ solutions to $$(a_1,b_1)-(a_2,b_2)=(a,b+1),$$
with $(a_1,b_1), (a_2,b_2) \in A$.  As above, take representatives between $0$ and $p-1$.  If $a_1-a_2=a$, then we have
$$(a_1+(c-c_1)p+b_1sp)-(a_2+(s-c_1)p+b_2sp)=a+cp+bsp$$
for each of the $c$ choices of $1 \leq c_1 \leq c$.  If $a_1-a_2=a-p$, then we have
$$(a_1+(c-c_1+1)p+b_1sp)-(a_2+(s-c_1)p+b_2sp)=a+cp+bsp$$
for each of the $c+1$ choices of $1 \leq c_1 \leq c+1$.  So over all of these solutions, we obtain at least $gc$ such expressions of $a+cp+bsp$ as a difference.  Combining these two cases gives a total of at least $g(s-1)$ total solutions, so $r_C(x) \geq g(s-1)$.
\end{proof}

Combining the previous two results gives the following.

\begin{corollary}\label{cor:cyclic}
For all positive integers $k,s$ and for every sufficiently large prime $p$ (relative to $k$), there exists a $g$-difference set $C \subseteq \mathbb{Z}/p^2s\mathbb{Z}$ of size $|C|=s(pk-k+1)$ with $g=\left\lceil k^2-2(k-1)-2k^{3/2} \right\rceil(s-1)$.
\end{corollary}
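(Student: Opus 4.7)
The plan is simply to compose the two results just proved: apply the preceding theorem to obtain a good difference set in $(\mathbb{Z}/p\mathbb{Z})^2$, and then feed that set into Lemma~\ref{lem:prime-to-cyclic} to lift it to $\mathbb{Z}/p^2 s \mathbb{Z}$. This is a genuine corollary, so there is no real obstacle; the only work is matching up parameters.

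Concretely, fix $k$ and let $p$ be sufficiently large (relative to $k$) so that the hypothesis of the theorem is satisfied. The theorem produces a $g'$-difference set $A \subseteq (\mathbb{Z}/p\mathbb{Z})^2$ of size $m = k(p-1)+1$ with
$$g' = \left\lceil k^2 - 2(k-1) - 2k^{3/2}\right\rceil.$$
Now apply Lemma~\ref{lem:prime-to-cyclic} to this $A$ with the given parameter $s$; the lemma immediately yields a $g'(s-1)$-difference set $C \subseteq \mathbb{Z}/p^2 s\mathbb{Z}$ of size
$$|C| = ms = s\bigl(k(p-1)+1\bigr) = s(pk - k + 1).$$
Setting $g = g'(s-1) = \left\lceil k^2-2(k-1)-2k^{3/2}\right\rceil(s-1)$ then matches exactly the quantities claimed in the corollary.

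Since every hypothesis required by the two ingredients is already in place (the primality of $p$, its size relative to $k$, and the arbitrariness of $s$), the proof amounts to stating these two invocations in sequence and verifying the arithmetic $|C| = s(pk-k+1)$ and $g = g'(s-1)$. There is no step that I expect to require additional argument.
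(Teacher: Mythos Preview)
Your proof is correct and is exactly the paper's intended argument: the corollary is obtained by combining the preceding theorem with Lemma~\ref{lem:prime-to-cyclic}, and your parameter bookkeeping $|C|=ms=s(pk-k+1)$ and $g=g'(s-1)$ is accurate.
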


Setting $k=4s^2$ in this corollary and applying the Prime Number Theorem gives the following asymptotic result, as in \cite{cilleruelo}.  (Recall that $\gamma_g(N) \geq \sqrt{gN}$ for all $g,N$.)

\begin{theorem}
We have
$$\liminf_{N \to \infty} \frac{\gamma_g(\mathbb{Z}/N\mathbb{Z})}{\sqrt{N}}=\sqrt{g}+O\left(g^{3/10}\right)=\sqrt{g}(1+o(1)).$$
\end{theorem}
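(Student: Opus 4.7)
The trivial bound $\gamma_g(G) \geq \sqrt{g|G|}$ noted in the introduction immediately gives $\liminf_{N \to \infty} \gamma_g(\mathbb{Z}/N\mathbb{Z})/\sqrt{N} \geq \sqrt{g}$, so the content of the theorem lies entirely in the matching upper bound, which the plan is to extract from Corollary~\ref{cor:cyclic} by tuning $s$ and $k$ against each other.

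Following the hint, I would set $k = 4s^2$ and compute the parameter produced by the corollary:
\[
g' := \left\lceil k^2 - 2(k-1) - 2k^{3/2}\right\rceil(s-1) = (s-1)\bigl(16s^4 - 16s^3 - 8s^2 + O(1)\bigr) = 16 s^5 + O(s^4).
\]
Since any $g'$-difference set is \emph{a fortiori} a $g$-difference set for every $g \leq g'$, I would then take $s = s(g)$ to be the smallest positive integer for which $g' \geq g$; inverting the asymptotic above yields $s = (g/16)^{1/5} + O(1) = \Theta(g^{1/5})$.

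With $s$ so fixed, Corollary~\ref{cor:cyclic} supplies, for every sufficiently large prime $p$, a $g$-difference set $C \subseteq \mathbb{Z}/p^2s\mathbb{Z}$ of size $s(pk-k+1)$. Writing $N = p^2 s$, the ratio of interest becomes
\[
\frac{|C|}{\sqrt{N}} = \frac{s(pk-k+1)}{p\sqrt{s}} = \sqrt{s}\left(k - \frac{k-1}{p}\right),
\]
which tends to $\sqrt{s}\,k = 4s^{5/2}$ as $p \to \infty$ along primes. The infinitude of primes (the Prime Number Theorem is more than enough) therefore realizes the bound $\liminf_{N \to \infty} \gamma_g(\mathbb{Z}/N\mathbb{Z})/\sqrt{N} \leq 4s^{5/2}$ along the subsequence $N = p^2 s$.

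Comparing back to $\sqrt{g}$ finishes the proof: from $g = 16s^5 + O(s^4)$ one extracts $\sqrt{g} = 4s^{5/2} + O(s^{3/2})$, so that $4s^{5/2} - \sqrt{g} = O(s^{3/2}) = O(g^{3/10})$ by $s = \Theta(g^{1/5})$. There is no genuine obstacle in this argument; the one subtlety worth verifying carefully is that the minimal admissible $s$ really is $(g/16)^{1/5} + O(1)$ (rather than a larger choice that would enlarge the error term), but this is a routine algebraic check using the explicit formula for $g'$ above.
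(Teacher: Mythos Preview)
Your proposal is correct and follows exactly the approach the paper indicates: set $k=4s^2$ in Corollary~\ref{cor:cyclic}, choose $s$ so that the resulting parameter $g'$ just exceeds $g$, and let $p$ range over large primes to realize the subsequence $N=p^2s$. The paper states this in a single sentence (invoking the Prime Number Theorem, though as you note the mere infinitude of primes already suffices for the liminf bound), and your write-up simply supplies the routine asymptotic bookkeeping that the paper omits.
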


\section{From sets to functions} \label{sec:sets-to-functions}

The ``easy direction'' of the main result of Cilleruelo, Ruzsa, and Vinuesa \cite{cilleruelo} transfers to the setting of difference sets with little difficulty.

\begin{theorem}\label{thm:easy-direction}
If $A \subset \mathbb{Z}$ is a $g$-difference set for $[N]$, then there is a function $f \in \mathcal{F}$ with $$\int_{\mathbb{R}} f(x) \, dx= \frac{|A|}{\sqrt{gN}}.$$
In particular, we have
$$\tau \leq \frac{\eta_g(N)}{\sqrt{gN}}$$
for all choices of $g,N$.
\end{theorem}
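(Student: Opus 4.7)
The plan is to construct $f$ directly from $A$ by a rescaled ``indicator box'' construction, mimicking the approach used in the Sidon setting by Cilleruelo, Ruzsa, and Vinuesa. Specifically, I would define
$$f(x) = \sqrt{\tfrac{N}{g}}\, \sum_{a \in A} \mathbf{1}_{[a/N,\,(a+1)/N)}(x),$$
a sum of disjoint boxes of height $\sqrt{N/g}$ and width $1/N$ placed at the scaled positions $a/N$ for $a \in A$. The $L^1$ norm is then $\sqrt{N/g}\cdot |A|/N = |A|/\sqrt{gN}$, which is the target value, so everything reduces to showing $f \in \mathcal{F}$.

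The substantive step is verifying that $(f \star f)(x) \geq 1$ for every $x \in [0,1]$. By bilinearity,
$$(f \star f)(x) = \tfrac{N}{g} \sum_{a_1, a_2 \in A} \bigl|[a_1/N,\,(a_1+1)/N) \cap [a_2/N - x,\,(a_2+1)/N - x)\bigr|,$$
and the overlap of the two unit-width boxes indexed by $a_1, a_2$ depends only on the difference $d = a_2 - a_1$, contributing a triangular bump of length $(1/N - |d/N - x|)_+$. Fixing $x \in [m/N,(m+1)/N)$ and writing $x = m/N + \epsilon$ with $\epsilon \in [0, 1/N)$, only $d = m$ and $d = m+1$ can contribute. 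Using the symmetry $r_A(d) = r_A(-d)$, I expect the clean identity
$$(f \star f)(x) = \tfrac{N}{g}\bigl[r_A(m)\,(\tfrac{1}{N} - \epsilon) + r_A(m+1)\,\epsilon\bigr].$$

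Because $A$ is a $g$-difference set for $[N]$, both $r_A(m)$ and $r_A(m+1)$ are at least $g$ whenever $m$ and $m+1$ lie in $[N]$, so the bracketed quantity is at least $g/N$ and $(f \star f)(x) \geq 1$ throughout $x \in [1/N, 1)$. The two boundary cases are the only bits that require a separate mention: for $x \in [0, 1/N)$ the first coefficient is $r_A(0) = |A|$, which is still at least $g$ by the trivial bound $g \leq r_A(1) \leq |A|$; for the endpoint $x = 1$ only $r_A(N) \geq g$ is needed, exactly the $m = N$ case of the hypothesis. Taking $A$ of minimum possible size $\eta_g(N)$ then yields $\tau \leq \eta_g(N)/\sqrt{gN}$. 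I do not foresee any serious obstacle — the whole argument is a direct translation of the sets-to-functions step in the Sidon argument with the relevant inequality flipped, and the only delicacy is careful bookkeeping of the two endpoint boxes in the autocorrelation sum.
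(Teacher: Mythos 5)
Your construction is exactly the paper's: the same step function of height $\sqrt{N/g}$ on the intervals $[a/N,(a+1)/N)$, the same observation that $(f\star f)$ is piecewise linear with values $r_A(m)/g$ at the grid points $m/N$, and the same conclusion $\tau\leq \eta_g(N)/\sqrt{gN}$. Your explicit treatment of the left endpoint via $r_A(0)=|A|\geq r_A(1)\geq g$ is correct and in fact slightly more careful than the paper, which passes over that case silently.
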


\begin{proof}
Let $A \subset \mathbb{Z}$ be a $g$-difference set for $[N]$ of size $|A|=\eta_g(N)$.  We now define a function $f \in \mathcal{F}$ via
$$f(x)=\begin{cases}
\sqrt{N/g}, &a/N \leq x< (a+1)/N \text{ for some } a \in A\\
0, &\text{otherwise}
\end{cases}.$$
Note immediately that
$$\int_{\mathbb{R}}f(x) \, dx=\frac{|A|}{\sqrt{gN}},$$
as required.  Write $\chi_A$ for the $0-1$ indicator function of the set $A$.  For $j/N \leq x \leq (j+1)/N$ (where $j \in 0,1,\ldots, N-1$), we have
$$(f \star f)(x)=\frac{N}{g} \left[ \sum_{i \in \mathbb{Z}}\chi_A(i) \chi_A(i+j)\left( \frac{j+1}{N}-x \right)+\sum_{i \in \mathbb{Z}}\chi_A(i)\chi_A(i+j+1)\left( x-\frac{j}{N} \right) \right].$$
This expression (which is linear in $x$) must achieve its minimum value at either $x=j/N$ or $x=(j+1)/N$, in which cases we have (respectively)
$$(f \star f)\left(\frac{j}{N}\right)=\frac{1}{g}\sum_{i \in \mathbb{Z}}\chi_A(i) \chi_A(i+j)=\frac{r_A(j)}{g}$$ and $$(f \star f)\left(\frac{j+1}{N}\right)=\frac{1}{g} \sum_{i \in \mathbb{Z}}\chi_A(i)\chi_A(i+j+1)=\frac{r_A(j+1)}{g}.$$
The assumption on $A$ ensures that each of these quantities is at least $1$, which completes the proof.
\end{proof}

\section{From functions to sets}\label{sec:functions-to-sets}

We now turn out attention to the ``hard direction'' of the Main Theorem, that is, moving from the continuous world to the discrete world.  Our argument, like the corresponding argument in \cite{cilleruelo}, is inspired by a result of Schinzel and Schmidt \cite{schinzel}.

Madrid and Ramos \cite{madrid} recently showed the existence of an extremizing measure $f \in \mathcal{F}$ with $\int_{\mathbb{R}}f(x) \, dx=\tau$, but it remains a difficult problem in analysis to say much more.  We use this result because it makes our proofs easier to read, but it is equally possible to work directly from the definition of $\tau$ as an infimum, at the cost of some extra $\varepsilon$'s that end up being inconsequential.  The following lemma is cleaner than the analogous statement in the setting of Sidon sets because we do not require our functions to have bounded support.

\begin{lemma}\label{lem:function-to-sequence}
For every $\varepsilon>0$ and every sufficiently large natural number $N$ (depending on $\varepsilon$), there exist nonnegative real numbers $\{a_i\}_{i \in \mathbb{Z}}$ satisfying the following three conditions:
\begin{enumerate}
\item $\sum_{i \in \mathbb{Z}}a_i\leq N\tau(1+\varepsilon)$.
\item Every $a_i \leq \left(\frac{1}{\tau N^{2/3}}\right) \sum_{i \in \mathbb{Z}}a_i$.
\item Every integer $1 \leq m \leq N$ satisfies $\sum_{i \in \mathbb{Z}}a_i a_{i+m} \geq N(1-\varepsilon)$
\end{enumerate}
\end{lemma}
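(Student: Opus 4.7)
The plan is to discretize an approximate extremizer of $\tau$ at scale $1/N$. Using the Madrid--Ramos existence theorem together with standard truncation and mollification, fix a continuous, nonnegative function $f \colon \mathbb{R} \to \mathbb{R}_{\geq 0}$ with compact support in some interval $[-L, L]$ such that $\int_\mathbb{R} f \leq \tau(1+\varepsilon/4)$ and $(f \star f)(x) \geq 1$ for every $x \in [0,1]$. Then set $a_i := f(i/N)$ for every $i \in \mathbb{Z}$; only finitely many of these are nonzero, and all are nonnegative.

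The three conditions should follow from Riemann-sum approximations at mesh $1/N$. Condition (1) becomes
\[
\sum_{i \in \mathbb{Z}} a_i = N \int_{\mathbb{R}} f + o(N) \leq N\tau(1+\varepsilon)
\]
for $N$ large, by uniform continuity of $f$. Condition (3) reduces to the estimate
\[
\sum_{i \in \mathbb{Z}} f(i/N)\, f((i+m)/N) = N (f \star f)(m/N) + o(N) \geq N(1-\varepsilon)
\]
for each $m \in \{1, \ldots, N\}$, since $m/N \in (0,1]$ forces $(f \star f)(m/N) \geq 1$. The key point is that the $o(N)$ error is \emph{uniform} in $m$: it depends only on $L$, $\|f\|_\infty$, and the modulus of continuity of $f$, all of which are fixed once $f$ is chosen. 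Condition (2) is the easiest: since $a_i \leq \|f\|_\infty$ is a fixed constant while $\sum_j a_j \geq N\tau(1-\varepsilon)$, we have $a_i / \sum_j a_j = O(1/N)$, which is well below $1/(\tau N^{2/3})$ once $N$ is large.

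The main obstacle is the preliminary production of a nice $f$: the Madrid--Ramos extremizer $f_0 \in \mathcal{F}$ with $\int f_0 = \tau$ is not a priori bounded or compactly supported. To remedy this, first truncate $f_0$ to $[-L, L]$ for large $L$, then mollify by a smooth bump of width $\delta$, and finally rescale by a factor $1+\eta$ to restore $(f \star f)(x) \geq 1$ on $[0,1]$. Choosing $L$ large and $\delta, \eta$ small ensures the total increase in $\int f$ stays below $\varepsilon\tau/4$. As the paper notes, one can alternatively bypass the extremizer and work directly from the definition of $\tau$ as an infimum, at the cost of a little extra $\varepsilon$-bookkeeping.
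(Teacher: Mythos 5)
Your sampling step is fine as far as it goes, but the real content of the lemma is hidden in the preliminary step you describe as ``standard truncation and mollification,'' and as written that step has a genuine gap. The Madrid--Ramos extremizer is only known to be an integrable measure: it need not be bounded or compactly supported, and truncating it to $[-L,L]$ does \emph{not} preserve the pointwise bound $(f\star f)(x)\ge 1$ for every $x\in[0,1]$. When $f$ is unbounded, the part of the autocorrelation at a fixed $x$ coming from $|t|>L$ is not controlled by the tail mass alone; all you can extract from $\int_{\mathbb R}\bigl((f\star f)-(f_L\star f_L)\bigr)\le \bigl(\int f\bigr)^2-\bigl(\int f_L\bigr)^2$ is that the loss is small \emph{off an exceptional set of $x$ of small measure}. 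Your mollification can absorb such an exceptional set only if its measure is small compared with the mollification width $\delta$, so $L$ must be chosen after (and depending on) $\delta$, not ``$L$ large and $\delta,\eta$ small'' independently, and the only thing you monitor --- the increase of $\int f$ --- is not the issue. Moreover, mollification at scale $\delta$ ruins the bound for $x$ within $O(\delta)$ of $1$ (half of the smoothing window sits where no lower bound is available), a deficit of order $1/2$ that a final multiplication by $1+\eta$ cannot repair; what works is a horizontal dilation by $1+O(\delta)$, which simultaneously re-extends the interval of validity and boosts the autocorrelation, the left endpoint being saved by the evenness of $f\star f$. With these repairs (truncate, bound the loss in $L^1$, mollify at a scale large relative to the exceptional set, then dilate) your construction does go through and produces a continuous, bounded, compactly supported member of $\mathcal F$ with integral at most $\tau(1+\varepsilon/4)$, after which your Riemann-sum estimates and the easy verification of condition (2) are correct. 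Note that the paper's concluding section explicitly remarks that producing a bounded, compactly supported near-extremizer is ``far from obvious,'' so this step cannot be waved through as routine.

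For comparison, the paper avoids regularization entirely: it takes local averages of the raw extremizer over windows of width $2L/N$ with $L=\lceil(\tau/2)N^{2/3}\rceil$, setting $a_i=\frac{N}{2L}\int_{(i-L)/N}^{(i+L)/N}f$. The coarse window is exactly what yields condition (2) (each $a_i\le\frac{N}{2L}\int f$), condition (1) is an identity, and condition (3) follows from counting how often the diamond-shaped regions of integration cover horizontal strips, using $(f\star f)\ge 1$ on $[-1,1]$, with a final horizontal stretch handling $m$ near $N$. Your pointwise sampling at mesh $1/N$ buys simpler arithmetic, but only after the delicate regularization; the paper's averaging works directly with an arbitrary, possibly unbounded and non-compactly supported, extremizing measure.
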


\begin{proof}
Let $f \in \mathcal{F}$ be an extremizing measure with $\int_{\mathbb{R}}f(x) \, dx=\tau$ (where we know that such a $f$ exists from \cite{madrid}).  Let $L=\left\lceil (\tau/2) N^{2/3} \right\rceil$.  We define the ``local averages''
$$a_i=\frac{N}{2L}\int_{(i-L)/N}^{(i+L)/N} f(x) \, dx$$
for all $i \in \mathbb{Z}$.  We record the following facts:
\begin{enumerate}
\item We have
$$\sum_{i \in \mathbb{Z}}a_i=N \int_{\mathbb{R}} f(x) \, dx=N \tau$$
since the intervals $[(i-L)/N, (i+L)/N)]$ evenly cover $\mathbb{R}$ exactly $2L$ times.
\item For every $i \in \mathbb{Z}$, we have
$$a_i \leq \frac{N}{2L}\int_{\mathbb{R}}f(x) \, dx \leq \frac{N}{\tau N^{2/3}}(\tau)=\left(\frac{1}{\tau N^{2/3}}\right) \sum_{i \in \mathbb{Z}}a_i.$$
\item Fix a natural number $m \leq N-(2L-1)$.  We can write:
\begin{align*}
    \sum_{i \in \mathbb{Z}}a_i a_{i+m}
    &=\left( \frac{N}{2L} \right)^2 \sum_{i \in \mathbb{Z}} \int_{(i-L)/N}^{(i+L)/N} f(x) \, dx \int_{(i+m-L)/N}^{(i+m+L)/N} f(y) \, dy\\
     &=\left( \frac{N}{2L} \right)^2 \sum_{i \in \mathbb{Z}} \int_{(i-L)/N}^{(i+L)/N} \int_{(i+m-L)/N-x}^{(i+m+L)/N-x} f(x) f(x+z) \, dz \, dx.
\end{align*}
For each $i$, the region of integration is a diamond in the $xz$-plane with vertices
$$\left( \frac{i-L}{N}, \frac{m}{N} \right), \left( \frac{i-L}{N}, \frac{m+2L}{N} \right), \left( \frac{i+L}{N}, \frac{m}{N} \right), \left( \frac{i+L}{N}, \frac{m-2L}{N} \right).$$
As $i$ ranges over $\mathbb{Z}$, these diamonds range over (most of) the horizontal strip given by $(m-2L)/N \leq z \leq (m+2L)/N$, and horizontal lines with $z$-value closer to $m/N$ are covered more times.  To be precise: for each integer $1 \leq j \leq 2L-1$, the horizontal strip $(m-j)/N \leq z \leq (m+j)/N$ is completely covered at least $2L-j$ times.  So, using the fact that $\int_{\mathbb{R}}f(x)f(x+z) \, dx \geq 1$ for all $-1 \leq z \leq 1$, we get
\begin{align*}
    \sum_{i \in \mathbb{Z}}a_i a_{i+m}
    &\geq \left( \frac{N}{2L} \right)^{\! 2} \,  \sum_{j=1}^{2L-1} \frac{2}{N} (2L-j)=\left( \frac{2L-1}{2L} \right) N.
\end{align*}
\end{enumerate}

Finally, we horizontally stretch $f$ so that this bound holds for $m$ ranging from $1$ to $N$ instead of only from $1$ to $N-2L+1$; this adjustment costs us an extra factor of $(1+\varepsilon)$ in condition (1) (where $\varepsilon \to 0$ as $N \to \infty$) and cancels out in (2).
\end{proof}

We now wish to use these sequences for a probabilistic construction of small $g$-difference sets.  As such, we define
$$p_i=\frac{\tau N^{2/3} a_i}{\sum_{j\in \mathbb{Z}}a_j}$$
for each $i \in \mathbb{Z}$.  Note that $p_i \leq 1$, which guarantees that our probabilities-to-be are well-defined.  We also record the facts that
$$\sum_{i \in \mathbb{Z}} p_i=\tau N^{2/3} \quad \text{and} \quad \sum_{i \in \mathbb{Z}}p_ip_{i+m} \geq \left( \frac{1-\varepsilon}{(1+\varepsilon)^2} \right) N^{1/3}.$$

Before we prove the main result of this section, we require a Chernoff tail bound for infinite sums of independent Boolean variables.  The finite-sum version of the following lemma appears as Corollary~1.9 in the standard text of Tao and Vu \cite{tao}; we defer the easy proof (for which we could not find a reference in the literature) to the end of this section.

\begin{lemma}\label{lem:chernoff}
Let $\{p_i\}_{i \in \mathbb{N}}$ be a sequence of real numbers in $[0,1]$ with $\sum_{i=1}^\infty=\mu <\infty$.  Let $X=\sum_{i=1}^{\infty}X_i$ be a sum of independent Boolean random variables, where each $X_i$ equals $1$ with probability $p_i$ and 0 with probability $1-p_i$ (so that $X$ takes values in $\mathbb{Z}_{\geq 0} \cup \{ \infty\}$).  Then for any $\delta>0$ we have the (two-sided) tail bound
$$\mathbb{P}(|X-\mu| \geq \delta \mu) \leq 2e^{- \min \{\delta^2/4, \delta/2\}\mu}.$$
\end{lemma}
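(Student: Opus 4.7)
The plan is to reduce to the finite-$n$ Chernoff bound (Corollary~1.9 of \cite{tao}) via a straightforward truncation-and-limit argument. For each $n$, let $X^{(n)} = \sum_{i=1}^n X_i$ and $\mu_n = \sum_{i=1}^n p_i$; since the $X_i$ and $p_i$ are nonnegative, both sequences are monotone increasing with pointwise limits $X$ and $\mu$ respectively. Because $\mathbb{E}[X] = \mu < \infty$, we have $X < \infty$ almost surely (so $X$ is, on a full-measure set, an honest $\mathbb{Z}_{\geq 0}$-valued random variable and $X^{(n)} = X$ eventually).

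For the upper tail, I would use that $X^{(n)}$ is integer-valued and monotonically increasing to $X$ to write
$$\{X \geq (1+\delta)\mu\} = \bigcup_{n} \{X^{(n)} \geq (1+\delta)\mu\}$$
as an increasing union of events. Continuity of probability from below then gives
$$\mathbb{P}(X \geq (1+\delta)\mu) = \lim_{n \to \infty} \mathbb{P}(X^{(n)} \geq (1+\delta)\mu).$$
For each $n$ large enough that $\mu_n > 0$, set $\delta_n := (1+\delta)\mu/\mu_n - 1$, so $(1+\delta)\mu = (1+\delta_n)\mu_n$ and $\delta_n \geq \delta$ (since $\mu_n \leq \mu$). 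The finite-sum Chernoff bound yields
$$\mathbb{P}(X^{(n)} \geq (1+\delta_n)\mu_n) \leq e^{-\min\{\delta_n^2/4,\, \delta_n/2\}\, \mu_n}.$$
Sending $n \to \infty$ gives $\delta_n \to \delta$ and $\mu_n \to \mu$, so the right-hand side converges to the desired bound $e^{-\min\{\delta^2/4,\, \delta/2\}\mu}$.

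The lower tail is dual: $\{X \leq (1-\delta)\mu\} = \bigcap_n \{X^{(n)} \leq (1-\delta)\mu\}$ as a decreasing intersection, so continuity from above gives $\mathbb{P}(X \leq (1-\delta)\mu) = \lim_n \mathbb{P}(X^{(n)} \leq (1-\delta)\mu)$. When $\delta \in (0,1)$, the same recipe with $\delta_n' := 1 - (1-\delta)\mu/\mu_n$ (positive for $n$ large enough, and tending to $\delta$) applied to the finite Chernoff bound finishes the argument. The case $\delta \geq 1$ is immediate: $\{X \leq (1-\delta)\mu\} \subseteq \{X = 0\}$, and by independence and the elementary inequality $1-x \leq e^{-x}$,
$$\mathbb{P}(X = 0) = \prod_i (1-p_i) \leq e^{-\sum_i p_i} = e^{-\mu}.$$
Adding the two tail bounds yields the factor of $2$ in the statement.

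There is no genuine obstacle here, only some bookkeeping: one must verify that the truncation parameters $(\delta_n, \mu_n)$ always lie in the domain of validity of the finite bound and that the resulting exponents converge to the correct limit. This routine nature is presumably why the author calls the proof ``easy'' yet could not find a reference for this exact statement—Chernoff bounds for infinite sums of independent Bernoullis are folklore but are not usually written down because they reduce mechanically to the standard finite version.
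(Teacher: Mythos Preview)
Your argument is correct and follows the same overall strategy as the paper's proof: truncate to a finite partial sum, apply the finite Chernoff bound from \cite{tao}, and pass to the limit. The execution differs slightly. The paper fixes a truncation level $K$ with $\sum_{i\le K}p_i>\mu(1-\varepsilon)$, writes $X=R_K+T_K$, bounds $\mathbb{P}(|X-\mu|\ge\delta\mu)\le \mathbb{P}(|R_K-\mu|\ge\delta\mu)+\mathbb{P}(T_K\neq 0)$, applies finite Chernoff to the first term, crudely bounds the second by $\varepsilon\mu$, and then lets $\varepsilon\to 0$. Your version instead separates the upper and lower tails and exploits the monotonicity $X^{(n)}\uparrow X$ to invoke continuity of probability from below and above directly, which avoids the extra $\varepsilon\mu$ term and the need to control $\mathbb{P}(T_K\neq 0)$. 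Either way the content is the same reduction to the finite case; your route is a touch cleaner, at the mild cost of having to handle the lower-tail case $\delta\ge 1$ by hand (which you do correctly, noting that $\{X\le(1-\delta)\mu\}$ is then contained in $\{X=0\}$, and indeed empty for $\delta>1$).
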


\begin{theorem}\label{thm:sequence-to-set}
Fix any small $\varepsilon>0$.  Take $\{p_i\}_{i \in \mathbb{Z}}$ for $\varepsilon$ and some $N$ as above.  Let $A \subseteq \mathbb{Z}$ be a random subset where each integer $i$ is included in $A$ independently with probability $p_i$.  Then both of the following hold w.h.p for $N$ sufficiently large:
\begin{enumerate}
\item $|A| \leq (1+\varepsilon)\tau N^{2/3}$.
\item $A$ is a $g$-difference set for $[N]$ with parameter
$$g=\left(\frac{(1-\varepsilon)^2}{(1+\varepsilon)^2}\right) N^{1/3}.$$
\end{enumerate}
In particular, for every sufficiently large $N$ and for every $k \geq \left\lfloor (1+\varepsilon)\tau N^{2/3} \right\rfloor$, there exists such a $g$-difference set for $[N]$ of size exactly $k$.
\end{theorem}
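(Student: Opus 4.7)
The plan is to apply concentration inequalities to both $|A|$ and each $r_A(m)$ for $m\in[N]$, then union bound: if both (1) and the event ``$r_A(m)\ge g$ for every $m\in[N]$'' hold with probability $1-o(1)$ as $N\to\infty$, some realization satisfies both simultaneously, and the ``exactly $k$'' claim then follows by padding.

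For the upper bound on $|A|=\sum_i X_i$, I apply Lemma~\ref{lem:chernoff} directly with $\mu=\sum_i p_i=\tau N^{2/3}$ and $\delta=\varepsilon$: this gives $\mathbb{P}(|A|>(1+\varepsilon)\tau N^{2/3})\le 2\exp(-c_\varepsilon N^{2/3})=o(1)$.

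The heart of the argument is the lower bound on $r_A(m)=\sum_i X_i X_{i+m}$. By hypothesis $\mathbb{E}[r_A(m)]\ge\mu_m:=\tfrac{1-\varepsilon}{(1+\varepsilon)^2}N^{1/3}$, only a factor $(1-\varepsilon)^{-1}$ above the target $g=(1-\varepsilon)\mu_m$. The chief obstacle is that $r_A(m)$ is \emph{not} a sum of independent Bernoullis, since $X_i X_{i+m}$ and $X_{i+m} X_{i+2m}$ share the variable $X_{i+m}$. To get around this, I partition $\mathbb{Z}$ into blocks $B_k=[2km,2km+2m)$ of length $2m$ and decompose $r_A(m)=r_A^L(m)+r_A^R(m)$, where $r_A^L(m)$ sums $X_i X_{i+m}$ over indices $i$ in the left half of some $B_k$ (so the pair $(i,i+m)$ lies inside $B_k$) and $r_A^R(m)$ sums over the remaining $i$'s (whose pair $(i,i+m)$ straddles $B_k$ and $B_{k+1}$). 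Within each of these sums the distinct pairs use pairwise-disjoint variables, so $r_A^L(m)$ and $r_A^R(m)$ are each genuine sums of independent Bernoullis (though they are not independent of each other). Applying the additive Chernoff lower-tail bound $\mathbb{P}(S<\mathbb{E}[S]-t)\le\exp(-t^2/(2\mathbb{E}[S]))$ to each half with $t=\varepsilon\mathbb{E}[r_A(m)]/2$, and using $\mathbb{E}[r_A^L(m)],\mathbb{E}[r_A^R(m)]\le\mathbb{E}[r_A(m)]$, gives each failure probability $\le\exp(-\varepsilon^2\mathbb{E}[r_A(m)]/8)\le\exp(-c'_\varepsilon N^{1/3})$. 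On the intersection of the two good events, $r_A(m)\ge\mathbb{E}[r_A(m)]-2t=(1-\varepsilon)\mathbb{E}[r_A(m)]\ge(1-\varepsilon)\mu_m=g$. Union bounding over the $N$ values of $m$ contributes at most $2N\exp(-c'_\varepsilon N^{1/3})=o(1)$.

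Combining, with probability $1-o(1)$ the random set $A$ satisfies both properties, so some realization does. For the ``exactly $k$'' assertion, given any $k\ge\lfloor(1+\varepsilon)\tau N^{2/3}\rfloor$ and any realization $A$ satisfying both conclusions, we have $|A|\le\lfloor(1+\varepsilon)\tau N^{2/3}\rfloor\le k$ (since $|A|$ is an integer), so I augment $A$ by any $k-|A|$ additional integers not already in $A$; each such addition can only increase $r_A(m)$, so the enlarged set remains a $g$-difference set for $[N]$ of size exactly $k$. The main obstacle throughout is the dependence in $r_A(m)$; the block decomposition is the key move reducing the problem to a routine Chernoff estimate.
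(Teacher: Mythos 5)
Your proposal is correct and follows essentially the same route as the paper: your block decomposition of $\mathbb{Z}$ into left/right halves of $[2km,2km+2m)$ is exactly the paper's partition by residues modulo $2m$ into $S_1=\{1,\ldots,m\}$ and $S_2=\{m+1,\ldots,2m\}$, and the remaining steps (Chernoff for $|A|$, lower-tail concentration on each half with deviation $\varepsilon\mathbb{E}(r_A(m))/2$, union bound over $m\in[N]$, and padding for the ``exactly $k$'' claim) match the paper's argument. The only cosmetic difference is that you invoke a standard one-sided lower-tail bound directly (which does extend to infinite Bernoulli sums, since the lower tail only improves under truncation) instead of the paper's two-sided Lemma~\ref{lem:chernoff} with its case split.
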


\begin{proof}
First, Lemma~\ref{lem:chernoff} tells us that
$$\mathbb{P}\left(\left||A|-\tau n^{2/3}\right| \geq \varepsilon \tau N^{2/3} \right) \leq 2e^{-\varepsilon^2 \tau N^{2/3}/4},$$
where this quantity approaches $0$ as $N \to \infty$.

Second, fix any integer $1 \leq m \leq N$.  Consider the partition
$$\mathbb{Z}=S_1 \cupdot S_2$$
where $S_1$ contains all integers with residue between $1$ and $m$ (inclusive) modulo $2m$ and $S_2$ contains all integers with residue between $m+1$ and $2m$ (inclusive) modulo $2m$.  Note that for every $x \in \mathbb{Z}$, the integers $x$ and $x+m$ are in different parts of this partition.  Write
$$r_A(m)=\sum_{x \in S_1} \chi_A(x)\chi_A(x+m)+\sum_{x \in S_2} \chi_A(x)\chi_A(x+m),$$
where each sum is (by construction) a sum of independent Boolean random variables.  (Recall that $\chi_A$ denotes the $0-1$ indicator function of the set $A$.)  For $j=1,2$, let
$$\mu_j=\mathbb{E} \left( \sum_{x \in S_j} \chi_A(x)\chi_A(x+m) \right)=\sum_{x \in S_j} p_x p_{x+m},$$
where $\mu_1+\mu_2=\mathbb{E}(r_A(m)) \geq [(1-\varepsilon)/(1+\varepsilon)^2]n^{1/3}$.

For $\varepsilon \mathbb{E}(r_A(m))/2\leq 2 \mu_j$, Lemma~\ref{lem:chernoff} gives:
\begin{align*}
\mathbb{P} \left(\left|\sum_{x \in S_j} \chi_A(x)\chi_A(x+m)-\mu_j \right|\geq \frac{\varepsilon \mathbb{E}(r_A(m))}{2}  \right) &\leq 2e^{-\frac{\varepsilon^2 \mathbb{E}(r_A(m))^2}{16 \mu_j}}\\
 &\leq 2e^{-\frac{\varepsilon^2 \mathbb{E}(r_A(m))}{16}}.
\end{align*}
For $\varepsilon \mathbb{E}(r_A(m))/2>2 \mu_j$, Lemma~\ref{lem:chernoff} gives:
\begin{align*}
\mathbb{P} \left(\left|\sum_{x \in S_j} \chi_A(x)\chi_A(x+m)-\mu_j \right|\geq \frac{\varepsilon \mathbb{E}(r_A(m))}{2}  \right) &\leq 2e^{-\frac{\varepsilon \mathbb{E}(r_A(m))}{4}}\\
 &\leq 2e^{-\frac{\varepsilon^2 \mathbb{E}(r_A(m))}{16}}.
\end{align*}
So, in any event, taking a union bound over $j=1,2$ and using the known lower bound on $\mathbb{E}(r_A(m))$ gives that
$$\mathbb{P} \left( r_A(m) \leq \left(\frac{(1-\varepsilon)^2}{(1+\varepsilon)^2}\right) N^{1/3} \right) \leq 4e^{-\frac{\varepsilon^2 (1-\varepsilon)N^{1/3}}{16(1+\varepsilon)^2}}.$$
Taking a union bound over all $1 \leq m \leq N$ gives that
$$\mathbb{P}\left(r_A(m)\leq \left(\frac{(1-\varepsilon)^2}{(1+\varepsilon)^2}\right) N^{1/3} \text{ for some $m \in [N]$} \right) \leq 4Ne^{-\frac{\varepsilon^2 (1-\varepsilon)N^{1/3}}{16(1+\varepsilon)^2}},$$
where this probability approaches $0$ as $N \to \infty$.  Combining this fact with the observation in the first paragraph of the proof establishes the result.
\end{proof}

We conclude this section by proving Lemma~\ref{lem:chernoff}.

\begin{proof}[Proof of Lemma~\ref{lem:chernoff}]
Note that $\delta^2/4 < \delta/2$ for $\delta < 2$ and $\delta/2<\delta^2/4$ for $\delta>2$.  We describe the case $\delta<2$; the argument for $\delta > 2$ is identical.  Fix some small $2>\delta>\varepsilon>0$.  Then there exists a natural number $K$ such that $\sum_{i=1}^K p_i>\mu(1-\varepsilon)$.  Define the random variables $$R_K=\sum_{i=1}^K X_i \quad \text{and} \quad T_K=\sum_{i=K+1}^\infty X_i,$$ so that $X=R_K+T_K$.  Using finite sum Chernoff bounds, we have:
\begin{align*}
\mathbb{P}\left( |R_K-\mu| \geq \delta \mu \right) &\leq \mathbb{P}\left( |R_K-\mathbb{E}(R_K)| \geq \left(\frac{\delta-\varepsilon }{1-\varepsilon}\right) \mathbb{E}(R_K) \right)\\
 & \leq 2e^{-\frac{(\delta-\varepsilon )^2\mathbb{E}(R_K)}{4(1-\varepsilon)^2}}\\
 & <2e^{-\frac{(\delta-\varepsilon )^2\mu}{4(1-\varepsilon)}}.
\end{align*}
Moreover, we have
$$\mathbb{P}(T_K \neq 0)<\varepsilon \mu.$$
Since $X=R_K+T_K$, we thus have
$$\mathbb{P}(|X-\mu|\geq \delta \mu)<2e^{-\frac{(\delta-\varepsilon )^2\mu}{4(1-\varepsilon)}}+\varepsilon \mu.$$
Finally, note that this quantity approaches $2e^{-\delta^2 \mu/4}$ as $\varepsilon \to 0$.
\end{proof}

\section{Putting everything together}\label{sec:everything}

The following lemma shows how to use $g_1$-difference sets in finite cyclic groups to ``blow up'' $g_2$-difference sets in the integers.

\begin{lemma}\label{lem:blow-up}
Let $A \subset \mathbb{Z}$ be a $g_1$-difference set for $[N]$ of size $|A|=k$, and let $C \subseteq \mathbb{Z}/q\mathbb{Z}$ be a $g_2$-difference set of size $|C|=\ell$.  Then there is a $g_1g_2$-difference set $B \subset \mathbb{Z}$ for $[qN]$ of size $k\ell$.
\end{lemma}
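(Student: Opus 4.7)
The plan is to build $B$ by a base-$q$ expansion trick: spread out copies of $A$ along $q\mathbb{Z}$ and fill each resulting length-$q$ block with canonical integer representatives of $C$. Concretely, let $\tilde{C} \subseteq \{0,1,\ldots,q-1\}$ be the set of least nonnegative residues of the elements of $C$, and set
\[
B = \{qa + c : a \in A,\ c \in \tilde{C}\}.
\]
Every $b \in B$ has a unique representation $b = qa + c$ with $c \in [0,q)$, so $|B| = k\ell$ as required.

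To estimate $r_B(m)$ for $m \in [qN]$, I would write $m = qn + r$ with $n \geq 0$ and $r \in \{0,1,\ldots,q-1\}$. The equation $b_1 - b_2 = m$ with $b_i = qa_i + c_i$ becomes $q(a_1 - a_2) + (c_1 - c_2) = qn + r$; since $c_1 - c_2 \in \{-(q-1),\ldots,q-1\}$ and $c_1 - c_2 \equiv r \pmod{q}$, exactly one of the following holds: either $c_1 - c_2 = r$ and $a_1 - a_2 = n$, or $c_1 - c_2 = r - q$ (impossible when $r = 0$) and $a_1 - a_2 = n+1$. Letting $\alpha$ and $\beta$ count the pairs $(c_1, c_2) \in \tilde{C}^2$ realizing these respective differences, this gives
\[
r_B(m) = \alpha \cdot r_A(n) + \beta \cdot r_A(n+1), \qquad \alpha + \beta = r_C(r \bmod q).
\]

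It remains to check that the $r_A$-values appearing are at least $g_1$. For $m \in [qN]$ one has $n \in \{0, 1, \ldots, N\}$, and whenever $\beta > 0$ (i.e., $r \geq 1$) one also has $n+1 \in \{1, \ldots, N\}$. For $n \in [N]$, $r_A(n) \geq g_1$ is the hypothesis on $A$; for the edge case $n = 0$ the trivial bound $r_A(0) = |A| = k \geq g_1$ suffices, since $g_1 \leq r_A(m') \leq |A|$ for any $m' \in [N]$. Combining everything,
\[
r_B(m) \geq g_1(\alpha + \beta) = g_1 \cdot r_C(r \bmod q) \geq g_1 g_2,
\]
using that $C$ is a $g_2$-difference set in $\mathbb{Z}/q\mathbb{Z}$. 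The main obstacle is purely organizational: one must correctly handle the ``carry'' case $c_1 < c_2$ that produces the $r_A(n+1)$ term; once this bookkeeping is sorted out, everything falls out directly from the hypotheses.
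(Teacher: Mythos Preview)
Your proof is correct and follows essentially the same construction and case analysis as the paper's proof: lift $C$ to integer representatives, form $B=\{qa+c\}$, and split representations of $m$ according to whether the $c$-part carries. Your version is in fact slightly more careful at the boundary $n=0$, where you explicitly justify $r_A(0)=|A|\geq g_1$ via $g_1\leq r_A(1)\leq |A|$; the paper leaves this step implicit.
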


\begin{proof}
We describe the elements of one such set $B$: let $\overline{C}$ be the preimage of $C$ in the interval $[1,q]$ under the canonical projection map $\mathbb{Z}\to \mathbb{Z}/q\mathbb{Z}$, and set $$B=\bigcup_{a \in A, c \in \overline{C}} qa+c.$$
Note that since these elements are all distinct, we have $|B|=k\ell$.

Now, we wish to bound $r_B(x)$ from below for arbitrary $1 \leq x \leq qN$.  Each such $x$ is (uniquely) expressible as
$$x=c+qa,$$
where $1 \leq c \leq q$ and $0 \leq a \leq N-1$.  So we need to find solutions to the equation
$$c+qa=(c_1+qa_1)-(c_2+qa_2),$$
where $a_1,a_2 \in A$ and $c_1,c_2 \in \overline{C}$.  By assumption, each of the equations
$$a_1-a_2=a \quad \text{and} \quad a_1-a_2=a+1$$
has at least $g_1$ solutions $(a_1,a_2) \in A \times A$.  Similarly, there are at least $g_2$ pairs $(c_1,c_2) \in \overline{C} \times \overline{C}$ each satisfying either $$c_1-c_2=c \quad \text{or} \quad c_1=c_2=c-q.$$
In the first case, we take the $g_1$ solutions $a_1-a_2=a$ and note that
$$(c_1+qa_1)-(c_2+qa_2)=c+qa.$$
In the second case, we take the $g_1$ solutions $a_1-a_2=a+1$ and note that
$$(c_1+qa_1)-(c_2+qa_2)=(c-q)+q(a+1)=c+qa.$$
Summing over the $g_2$ pairs $(c_1,c_2)$ gives $r_B(x) \geq g_1g_2$, as desired.
\end{proof}

We are finally ready to prove the hard direction of the Main Theorem.
\begin{theorem}\label{thm:final-steps}
Fix any small $\varepsilon>0$.  Then for all sufficiently large $g$ and sufficiently large $N$ (relative to $g$), we have
$$\frac{\eta_g(N)}{\sqrt{gN}} \leq \frac{(1+\varepsilon)^4}{1-\varepsilon}\tau.$$
\end{theorem}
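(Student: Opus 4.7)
The plan is to combine the probabilistic construction of Theorem~\ref{thm:sequence-to-set} with the nearly-extremal $g$-difference sets in cyclic groups from Section~\ref{sec:finite}, glued together by the blow-up construction of Lemma~\ref{lem:blow-up}. Loosely, I factor $g \approx g_1 g_2$ and $N \approx q N_0$, use the probabilistic construction to produce a $g_1$-difference set $A$ for $[N_0]$ of size close to $\tau N_0^{2/3}$, use Section~\ref{sec:finite} to produce a $g_2$-difference set $C \subseteq \mathbb{Z}/q\mathbb{Z}$ of size close to $\sqrt{g_2 q}$, and combine them via Lemma~\ref{lem:blow-up} into a $g_1 g_2$-difference set $B$ for $[q N_0] \supseteq [N]$ of size $|A|\cdot|C|$.

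Concretely, given $\varepsilon>0$ and $g$ sufficiently large, I would first pick $N_0$ (growing polynomially in $g$) and apply Theorem~\ref{thm:sequence-to-set} to obtain $A$ with $|A| \leq (1+\varepsilon)\tau N_0^{2/3}$ and $g_1 \geq \tfrac{(1-\varepsilon)^2}{(1+\varepsilon)^2} N_0^{1/3}$, choosing $N_0$ so that $g_1 \approx g^{1/2}$. I then set $g_2 = \lceil g/g_1 \rceil$, which (for $g$ large) satisfies $g_1 g_2 \geq g$ and $g_2 \leq (1+\varepsilon)\, g/g_1$. Finally, using the flexibility in Corollary~\ref{cor:cyclic} (varying the prime $p$ and auxiliary integers $k,s$ in the $q = p^2 s$ construction), I would pick $q$ in the range $N/N_0 \leq q \leq (1+\varepsilon) N/N_0$ admitting a $g_2$-difference set $C \subseteq \mathbb{Z}/q\mathbb{Z}$ with $|C| \leq (1+\varepsilon)\sqrt{g_2 q}$.

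Applying Lemma~\ref{lem:blow-up} then produces a $g_1 g_2$-difference set $B \subset \mathbb{Z}$ for $[q N_0]$, and hence a $g$-difference set for $[N]$, of size $|B| \leq (1+\varepsilon)^2 \tau\, N_0^{2/3}\sqrt{g_2 q}$. Using $g_2 q \leq (1+\varepsilon)^2\, gN/(g_1 N_0)$, this yields
$$\frac{|B|}{\sqrt{gN}} \leq \frac{(1+\varepsilon)^2\tau N_0^{2/3}\sqrt{g_2 q}}{\sqrt{gN}} \leq (1+\varepsilon)^3 \tau \cdot \frac{N_0^{1/6}}{\sqrt{g_1}} \leq (1+\varepsilon)^3 \tau \cdot \frac{1+\varepsilon}{1-\varepsilon} = \frac{(1+\varepsilon)^4}{1-\varepsilon}\tau,$$
where the last inequality uses the lower bound on $g_1$.

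The main obstacle is the final piece of parameter selection: verifying that the ``good'' moduli $q$ guaranteed by Corollary~\ref{cor:cyclic} (for the chosen $g_2$) are dense enough to land in the multiplicative interval $[N/N_0,\,(1+\varepsilon)N/N_0]$. Since $q$ ranges over integers of the form $p^2 s$ with $s$ also controlling the difference set parameter, one must balance $k$, $s$, and $p$ simultaneously; the freedom in choosing $p$ via the Prime Number Theorem should suffice to place $q$ in the target window once $N/N_0$ is taken sufficiently large. This is precisely where the hypothesis ``$N$ sufficiently large relative to $g$'' enters.
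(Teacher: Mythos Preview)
Your overall strategy---combine Theorem~\ref{thm:sequence-to-set}, Corollary~\ref{cor:cyclic}, and Lemma~\ref{lem:blow-up}---is exactly the paper's, and your arithmetic in the displayed chain of inequalities is correct. The difference is in how the parameters are balanced, and the paper's choice dissolves the ``main obstacle'' you flag rather than having to confront it.

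In the paper, the cyclic-group parameters $k,s$ (and hence $g_2$) are fixed \emph{once}, depending only on $\varepsilon$: one simply picks $k,s$ large enough that the set $C\subseteq\mathbb{Z}/p^2s\mathbb{Z}$ from Corollary~\ref{cor:cyclic} satisfies $|C|/\sqrt{g_2\,p^2s}\le 1+\varepsilon$. With $g_2$ now a constant, one lets $g_1$ absorb all of $g$ by choosing $g_1$ with $(g_1-1)g_2<g\le g_1g_2$, so that $g_1g_2/g<g_1/(g_1-1)\to 1$. The only remaining free parameter on the cyclic side is the prime $p$, and one needs $q=p^2s$ to land just above $N/\nu$; the Prime Number Theorem (via $p_i/p_{i-1}\to 1$) does this immediately. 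Thus there is no simultaneous juggling of $k,s,p$ against both a moving $g_2$ target and a moving $q$ target.

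By contrast, your choice $g_1\approx g_2\approx g^{1/2}$ forces $g_2$ to grow with $g$, so you must argue that the set of values $\lceil k^2-2(k-1)-2k^{3/2}\rceil(s-1)$ achievable in Corollary~\ref{cor:cyclic} is dense enough to approximate an arbitrary large $g_2$ to within a $(1+\varepsilon)$ factor \emph{while also} keeping the ratio $|C|/\sqrt{g_2 q}$ under control. This can be done (fix $s$ large depending on $\varepsilon$, then vary $k$), but it is an extra step you have not written out, and it is exactly the step the paper's ordering of quantifiers makes unnecessary.
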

\begin{proof}
We begin by gathering our tools.  Note that every sufficiently large integer $g_1$ can be expressed as
$$g_1=\left\lfloor \frac{(1-\varepsilon)^2}{(1+\varepsilon)^2}n^{1/3} \right\rfloor$$
for some natural number $n$.  Let $\nu=\nu(g_1)$ denote the largest such $n$.  Then Theorem~\ref{thm:sequence-to-set} guarantees that for every sufficiently large $g_1$ (say, $g_1 \geq g_0$) there exists a $g_1$-difference set $A$ for $[\nu]$ satisfying
$$\frac{|A|}{\sqrt{g_1\nu}} \leq \frac{(1+\varepsilon)^2}{1-\varepsilon}\tau.$$

At the same time, it follows from Corollary~\ref{cor:cyclic} that there exist natural numbers $g_2,s$ such that the following holds: for every sufficiently large prime $p$ (say, $p \geq p_h$), there exists a $g_2$-difference set $C \subseteq \mathbb{Z}/p^2s\mathbb{Z}$ satisfying
$$\frac{|C|}{\sqrt{g_2p^2s}} \leq 1+\varepsilon.$$
Let $p_h,p_{h+1},\ldots$ denote the successive primes starting at $p_h$.

We now proceed to the main argument.  Fix any $g \geq g_0g_2$.  Then there is some $g_1 \geq g_0$ such that
$$(g_1-1)g_2< g \leq g_1g_2.$$
Let $\nu=\nu(g_1)$.  For every sufficiently large $N$, there is an index $i>h$ such that
$$\nu p_{i-1}^2 s < N \leq \nu p_i^2 s.$$
Now, Lemma~\ref{lem:blow-up} says that there is a $g_1g_2$-difference set $B$ for $[\nu p_i^2 s]$ with
$$|B| \leq \left(\frac{(1+\varepsilon)^2}{1-\varepsilon}\tau \sqrt{g_1 \nu} \right)\left((1+\varepsilon)\sqrt{g_2 p_i^2s} \right)=\frac{(1+\varepsilon)^3}{1-\varepsilon}\tau \sqrt{(g_1g_2)(\nu p_i^2 s)}.$$
Of course, $A$ is also a $g$-difference set for $[N]$, so we get:
\begin{align*}
\frac{\eta_g(N)}{\sqrt{gN}} &\leq \frac{(1+\varepsilon)^3}{1-\varepsilon}\tau \sqrt{\frac{g_1g_2}{g}}\sqrt{\frac{\nu p_i^2 s}{N}}\\
 &< \frac{(1+\varepsilon)^3}{1-\varepsilon}\tau \sqrt{\frac{g_1g_2}{(g_1-1)g_2}}\sqrt{\frac{\nu p_i^2 s}{\nu p_{i-1}^2 s}}\\
 &=\frac{(1+\varepsilon)^3}{1-\varepsilon}\tau \sqrt{\frac{g_1}{g_1-1}} \left(\frac{p_i}{p_{i-1}}\right).
\end{align*}
The Prime Number Theorem implies that for all large enough $g$ and $N$ (relative to $g$)--which makes $g_1$ and $p_i$ correspondingly large--we have
$$\frac{\eta_g(N)}{\sqrt{gN}} \leq \frac{(1+\varepsilon)^4}{1-\varepsilon}\tau.$$
\end{proof}

Taking $\varepsilon \to 0$ gives the desired corollary.
\begin{corollary}\label{cor:liminf}
We have
$$\limsup_{g \to \infty} \limsup_{N \to \infty} \frac{\eta_g(N)}{\sqrt{gN}} \leq \tau.$$
\end{corollary}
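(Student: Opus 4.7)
The plan is to deduce Corollary~\ref{cor:liminf} directly from Theorem~\ref{thm:final-steps} by letting the error parameter $\varepsilon$ tend to $0^+$. Since all of the genuine mathematical work — the construction of small $g$-difference sets in cyclic groups, the probabilistic argument passing from functions to sets, and the ``blow-up'' procedure of Lemma~\ref{lem:blow-up} — has already been folded into Theorem~\ref{thm:final-steps}, no new ideas are required.

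First, I would fix an arbitrary $\varepsilon > 0$. Theorem~\ref{thm:final-steps} then supplies thresholds $g_0 = g_0(\varepsilon)$ and $N_0 = N_0(g,\varepsilon)$ such that
$$\frac{\eta_g(N)}{\sqrt{gN}} \leq \frac{(1+\varepsilon)^4}{1-\varepsilon}\,\tau$$
holds for all $g \geq g_0$ and all $N \geq N_0$. Taking the $\limsup$ as $N \to \infty$ preserves this inequality, so for every $g \geq g_0$ we have
$$\limsup_{N \to \infty} \frac{\eta_g(N)}{\sqrt{gN}} \leq \frac{(1+\varepsilon)^4}{1-\varepsilon}\,\tau.$$
Taking the $\limsup$ as $g \to \infty$ then yields
$$\limsup_{g \to \infty}\,\limsup_{N \to \infty} \frac{\eta_g(N)}{\sqrt{gN}} \leq \frac{(1+\varepsilon)^4}{1-\varepsilon}\,\tau.$$

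Finally, the left-hand side is independent of $\varepsilon$, while the right-hand side is a continuous function of $\varepsilon$ whose value tends to $\tau$ as $\varepsilon \to 0^+$ (because $(1+\varepsilon)^4/(1-\varepsilon) \to 1$). Sending $\varepsilon \to 0^+$ therefore gives the desired bound. I expect no real obstacle in this deduction; the only item to verify is the elementary continuity of the error factor at $\varepsilon = 0$, and the rest is a routine two-fold application of the monotonicity of $\limsup$ under pointwise inequalities.
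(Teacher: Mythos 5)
Your deduction is correct and is exactly the paper's argument: the paper proves Corollary~\ref{cor:liminf} by the same single line ``taking $\varepsilon \to 0$ in Theorem~\ref{thm:final-steps},'' and your two-fold $\limsup$ step followed by $\varepsilon \to 0^+$ is just a careful spelling-out of that. No differences worth noting.
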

The Main Theorem follows from this corollary and Theorem~\ref{thm:easy-direction}.

\section{Random subsets of finite groups}\label{sec:random}

Recall that a $g$-difference set $A$ in a finite abelian group $G$ trivially has size at least $\sqrt{g|G|}$ (in fact, strictly larger than $1/2+\sqrt{g(|G|-1)}$, by more careful counting).  The aim of this section is to show that this optimal dependence is aymptotically correct for random subsets.  We begin by showing why the analogous continuous question motivates looking at uniformly random subsets.  In what follows, let $\mathbb{T}^d=(\mathbb{R}/\mathbb{Z})^d$ denote the $d$-dimensional torus

Because every finite abelian group can be written as a direct product of cyclic groups, the problem of finding small $g$-difference sets in finite groups has the following continuous analog: find a small (in $L^1$ norm) function $h:\mathbb{T}^d \to \mathbb{R}_{\geq 0}$ such that $\int_{\mathbb{T}^d}h(t)h(x+t) \, dt \geq 1$ for all $x \in \mathbb{T}^d$.  This time, finding an optimal function is easy.  Note that any such $h$ satisfies
$$\left( \int_{\mathbb{T}^d} h(x) \, dx \right)^2 =\int_{\mathbb{T}^d} \int_{\mathbb{T}^d} h(t)h(x+t) \, dt \, dx \geq \int_{\mathbb{T}^d} 1\, dx=1,$$
which implies that $\int_{\mathbb{T}^d} h(x) \, dx \geq 1$.  At the same time, the constant function $h(x)=1$ is clearly a function of size $1$ that satisfies the required autocorrelation inequality, so we conclude that the optimal size is in fact exactly $1$.

Now, consider the discrete setting.  Write $$G=\mathbb{Z}/n_1 \mathbb{Z} \times \cdots \times \mathbb{Z}/n_d\mathbb{Z},$$ where each $n_i$ divides $n_{i+1}$.  Given any $g$-difference set $A \subseteq G$, a natural generalization of the argument of Theorem~\ref{thm:easy-direction} demonstrates the existence of a function $h: \mathbb{T}^d \to \mathbb{R}_{\geq 0}$ with $L^1$ norm $|A|/\sqrt{g|G|}$ and autocorrelation integral uniformly bounded below by $1$.  This result and the observations of the previous paragraph together give another view on the trivial bound $|A| \geq \sqrt{g|G|}$.

If we want to go in the other direction (that is, from functions to sets), then we may as well start with the constant function, which we already know is optimal.  Note that $G$ embeds naturally as a discrete subgroup of $\mathbb{T}^d$.  Localized averages of constant function around these embedded points are all $1$, so we will build a random subset $A$ by including each element (independently) with probability $\sqrt{g/|G|}$.  This way, the expected size of $A$ is $\sqrt{g|G|}$, and the expected value of $r_A(m)$ is $g$ for each nonzero $m \in G$.  Finally, we can use Chernoff bounds to show that with high probability, the truth is very close to the expectation.  We make this line of reasoning rigorous in the following theorem.

\begin{theorem}\label{thm:cyclic}
Fix any small $\delta, \varepsilon >0$.  Let $g: \mathbb{N} \to \mathbb{N}$ satisfy $1 \leq g(n) \leq n$ and grow such that $n=o\left(e^{\delta^2g(n)/45} \right)$.  Given a finite abelian group $G$, define a random subset $A \subseteq G$ where each element of $G$ is included independently with probability $\sqrt{g(|G|)/|G|}$.  As $|G| \to \infty$, the set $A$ w.h.p is a $g(|G|)(1-\delta)$-difference set of size at most $\sqrt{g(|G|)|G|}(1+\varepsilon)$.
\end{theorem}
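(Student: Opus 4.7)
The plan is to bound both the size of $A$ and each representation count $r_A(m)$ using standard concentration inequalities, with Janson's inequality handling the dependence in $r_A(m)$ for nonzero $m$. Write $n = |G|$, $g = g(n)$, and $p = \sqrt{g/n}$, so each $X_x := \mathbf{1}[x \in A]$ is an independent Bernoulli$(p)$ random variable, $\mathbb{E}(|A|) = np = \sqrt{gn}$, and $\mathbb{E}(r_A(m)) = np^2 = g$ for every nonzero $m \in G$.

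For the size condition, the multiplicative Chernoff bound applied to the Binomial$(n,p)$ variable $|A|$ gives $\mathbb{P}(|A| \geq (1+\varepsilon)\sqrt{gn}) \leq e^{-c\varepsilon^2 \sqrt{gn}}$, which tends to $0$ since $\sqrt{gn} \geq \sqrt{n} \to \infty$. The same Chernoff bound also provides a matching lower bound on $|A|$, which handles the $m = 0$ case of the difference-set condition, since $r_A(0) = |A|$ and $g \leq n$ ensures $\sqrt{gn} \geq g$.

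For each nonzero $m \in G$, write $r_A(m) = \sum_{x \in G} \mathbf{1}_{B_x}$ where $B_x := \{x \in A \text{ and } x+m \in A\}$ is an upward-monotone event in the random subset $A$. The products $X_x X_{x+m}$ are not independent (adjacent products share a factor), so plain Chernoff does not apply; and the two-block partition used over $\mathbb{Z}$ in the proof of Theorem~\ref{thm:sequence-to-set} does not transfer, because the ``shift-by-$m$'' graph on $G$ contains odd cycles precisely when $m$ has odd order. Janson's inequality cleanly bypasses this obstruction: the dependency sum
$$\Delta := \sum_{\substack{x \neq y \\ \{x, x+m\} \cap \{y, y+m\} \neq \emptyset}} \mathbb{P}(B_x \cap B_y)$$
is at most $2 n p^3 = 2 g \sqrt{g/n} \leq 2g$ (using $g \leq n$), so with $\mu := \mathbb{E}(r_A(m)) = g$ Janson yields
$$\mathbb{P}\bigl(r_A(m) \leq (1-\delta) g \bigr) \leq \exp\!\left(- \frac{\delta^2 \mu^2}{2(\mu + \Delta)}\right) \leq \exp\!\left(- \frac{\delta^2 g}{6}\right).$$
A union bound over the $n - 1$ nonzero $m$ produces a total failure probability at most $(n-1)\exp(-\delta^2 g/6) = o(1)$ under the hypothesis $n = o(e^{\delta^2 g / 45})$.

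The main obstacle is precisely this dependence in $r_A(m)$: elementary Chernoff cannot be applied directly, and a McDiarmid-type bounded-differences inequality loses a factor of roughly $g/n$, which is fatal because $g$ may be much smaller than $n$. Janson's inequality handles all orders of $m$ simultaneously and is the natural tool. An alternative, more elementary route that would comfortably accommodate the generous constant $45$ in the hypothesis is to properly $3$-color the shift-by-$m$ cycle graph and decompose $r_A(m)$ into at most six genuine sums of independent Bernoulli$(p^2)$'s indexed by ordered pairs of color classes; applying vanilla Chernoff to each piece reproduces an exponent of the same order, with uglier bookkeeping but no new ideas.
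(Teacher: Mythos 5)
Your proof is correct, but it takes a genuinely different route from the paper. The paper handles the dependence in $r_A(m)$ by an explicit coloring: for $m$ of order greater than $2$ it partitions $G$ into three classes $S_1,S_2,S_3$ (distributing each coset of $\langle m\rangle$ nearly alternately, with a small fix when $|\langle m\rangle|\equiv 1 \pmod 3$) so that no class contains both $x$ and $x+m$, which makes each $\sum_{x\in S_j}\chi_A(x)\chi_A(x+m)$ a sum of independent indicators with mean at least $g/5$; vanilla Chernoff plus a union bound over the classes then gives $\mathbb{P}(|r_A(m)-g|\ge \delta g)\le 6e^{-\delta^2 g/45}$ (with a separate, easier bipartite argument for order-$2$ elements), which is exactly where the constant $45$ in the hypothesis comes from. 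Your use of Janson's lower-tail inequality bypasses all of this coset bookkeeping and the case analysis on the order of $m$, and in fact yields the stronger exponent $\delta^2 g/6$, which of course suffices under the stated hypothesis $n=o(e^{\delta^2 g/45})$; your closing remark about the $3$-coloring alternative is essentially the paper's argument. Two small points to tidy: the bound $\Delta\le 2np^3$ fails when $m$ has order $2$, since then the unique neighbor $y=x+m$ has $E_y=E_x$ and $\Delta=np^2=g$; your subsequent inequality $\Delta\le 2g$ is still valid in every case, so the final estimate is unaffected, but the intermediate claim should be stated with this caveat. Also, for the $m=0$ case you need $|A|\ge(1-\delta)g$, so apply the lower Chernoff bound with deviation $\min(\delta,\varepsilon)$ (or note $(1-\varepsilon)\sqrt{gn}\ge(1-\delta)g$ for large $|G|$ unless $g=n$ exactly); this is trivial, and indeed the paper itself glosses over $m=0$.
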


\begin{proof}
Write $g=g(|G|)$.  First, Chernoff gives that
$$\mathbb{P}\left(\left||A|-\sqrt{g|G|}\right|\geq \varepsilon \sqrt{g|G|}\right)\leq 2e^{-\varepsilon^2 \sqrt{g|G|}/4},$$
and the quantity on the right-hand side approaches $0$ as $|G| \to \infty$.

Second, fix any nonzero $m \in G$.  Consider the case where $m$ has order greater than $2$ in $G$; we will explain later how to modify the argument for the case where $m$ has order exactly $2$.  Let $H$ denote the cyclic subgroup of $G$ generated by $m$.  We now describe a tripartite partition $$G=S_1 \cupdot S_2 \cupdot S_3.$$  We distribute the elements of each coset $c+H$ as follows.  List the elements $$c, c+m, c+2m, \ldots, c+(|H|-1)m.$$  If $|H| \not\equiv 1 \pmod{3}$, then assign these elements alternately to $S_1, S_2, S_3$, i.e., $S_1$ gets $c, c+3m, \ldots$ and $S_2$ gets $c+m, c+4m, \ldots$ and $S_3$ gets $c+2m, c+5m, \ldots$.  If $|H| \equiv 1 \pmod{3}$, then we modify this assignment by moving $c$ from $S_1$ to $S_3$.  Note that the size of each $S_j$ is either $(|G|/|H|)\lfloor|H|/3 \rfloor$ or $(|G|/|H|)\lceil|H|/3 \rceil$ and that no $S_j$ contains both $x$ and $x+m$ for any $x \in G$.

Now, write
$$r_A(m)=\sum_{j=1}^3 \sum_{x \in S_j} \chi_A(x) \chi_A(x+m),$$
where each inner sum is (by construction) a sum of independent Boolean random variables, each of which equals $1$ with probability $g/|G|$.  (Recall that $\chi_A$ denotes the $0-1$ indicator function for the set $A$.)  We have $$\mathbb{E} \left( \sum_{x \in S_j} \chi_A(x) \chi_A(x+m) \right)=\frac{g|S_j|}{|G|} \geq \frac{g}{5}.$$  Chernoff bounds give that
$$\mathbb{P}\left(\left|\sum_{x \in S_j} \chi_A(x) \chi_A(x+m)-\frac{g|S_j|}{|G|}  \right| \geq  \frac{\delta}{3} \cdot \frac{g|S_j|}{|G|} \right) \leq 2e^{-\frac{\delta^2 g|S_j|}{9|G|}} \leq 2e^{-\frac{\delta^2g}{45}}.$$
A union bound then gives (recalling that $|S_1|+|S_2|+|S_3|=|G|$)
$$\mathbb{P}(|r_A(m)-g| \geq \delta g) \leq 6e^{-\frac{\delta^2g}{45}}.$$

At this point, we sketch the case where $m$ has order $2$: we use only a bipartite partition $G=T_1 \cupdot T_2$ into $2$ equal parts by making alternate assignments of the elements of each coset; carrying through the (admittedly wasteful) analogous arguments gives the (still better) bound
$$\mathbb{P}(|r_A(m)-g| \geq \delta g) \leq 4e^{-\frac{\delta^2g}{8}}.$$

Next, taking a union bound over all $m \in G$ gives that
$$\mathbb{P}(|r_A(m)-g| \geq \delta g \text{ for some $m \in G$}) \leq 6|G|e^{-\frac{\delta^2g}{45}},$$
and this probability goes to $0$ as $|G| \to \infty$ because of the condition $|G|=o \left(e^{\delta^2 g/45} \right)$.
\end{proof}
Compared to Corollary~\ref{cor:cyclic}, the finite cyclic group special case of Theorem~\ref{thm:cyclic} is stronger in the sense that it works for groups of all orders (without divisibility restrictions) and weaker in the sense that it requires $g$ to be reasonably large.

We remark that a similar argument works for $g$-Sidon sets in finite groups.

\section{Conclusion}\label{sec:conclusion}
The Main Theorem connects the constant $\tau$ (from autocorrelation integral optimzation) to the asymptotic growth of $\eta_g(N)$.  Any improved bounds in one context immediately transfer to the other.
\begin{problem}
Improve current bounds on $\tau$ and $\eta_g(N)$.  In which context is it easier to establish upper bounds, and in which is it easier to establish lower bounds?
\end{problem}

A byproduct of our method is a correspondence between the discrete and continuous worlds in the context of generalized difference sets and autocorrelation integrals: every $g$-difference set $A$ for $[N]$ gives rise to a function $f \in \mathcal{F}$ with small $L^1$ norm, and every such function $f \in \mathcal{F}$ gives rise to $g$-difference sets $A$ of small size for (suitably large) $[N]$.  Moving from $g$-difference sets to functions and then back to $g$-difference sets gains us little, but moving from functions to $g$-difference sets and than back to functions provides new insights.  In particular, for every $\varepsilon>0$ we can find some $f \in \mathcal{F}$ such that the following hold:
\begin{enumerate}
\item $\int_{\mathbb{R}} f(x) \, dx <\tau +\varepsilon$.
\item The function $f$ has bounded support.
\item There is some positive constant $c$ such that $f(x) \in \{0, c\}$ for all $x \in \mathbb{R}$.  In particular, $f$ is uniformly bounded above.
\end{enumerate}
It is not surprising that there exists such a $f \in \mathcal{F}$, but it is far from obvious how to prove this fact without recourse to the passage through the discrete world.

Madrid and Ramos \cite{madrid} demonstrated the existence of an extremizing function $f \in \mathcal{F}$ with $L^1$ norm exactly $\tau$, but their method is nonconstructive and we know very little about the structure of such a $f$.
When Barnard and Steinerberger \cite{barnard} constructed functions $f \in \mathcal{F}$ with small $L^1$ norm, their functions had integrable singularities at $x=0,1$, and they suggest that perhaps any extremizing function must have this feature.
\begin{question}
Is there a function $f \in \mathcal{F}$ with $L^1$ norm equal to $\tau$ that has bounded support and/or is uniformly bounded above?  Must every function $f \in \mathcal{F}$ with $L^1$ norm equal to $\tau$ have singularities at $x=0,1$?
\end{question}
Both parts of this question would be resolved (the first completely in the affirmative and the second in the negative) by the existence of a $g$-difference set for $[N]$ of size exactly $\tau \sqrt{gN}$.
\begin{question}
Do there exist natural numbers $g,N$ such that $\eta_g(N)=\tau\sqrt{gN}$?  (We suspect not.)
\end{question}
We also reiterate the following elusive question from the Introduction.
\begin{question}
Does the limit
$$\lim_{N \to \infty} \frac{\eta_g(N)}{\sqrt{N}}$$
exist for all natural numbers $g$?
\end{question}

We conclude with three possible generalizations of the problems discussed in this paper; one of our emphases is that the result of \cite{cilleruelo} is representative of a more widespread phenomenon.  Recall that in Section~\ref{sec:random}, we established a version of the Main Theorem for finite abelian groups, and we remarked the same program could be carried out for $g$-Sidon sets in finite abelian groups.
\begin{problem}
Find a higher-dimensional version of the Main Theorem, i.e., a connection between functions on $\mathbb{R}^n$ and $g$-difference sets in $\mathbb{Z}^n$ (with some suitable definitions).  Do the same for the main result of \cite{cilleruelo}.  Consider also the setting of $\mathbb{T}^d \times \mathbb{R}^n$ and $G \times \mathbb{Z}^n$, where $G$ is a fixed finite abelian group of order $d$.
\end{problem}
\begin{problem}
The set $A$ is a $g$-difference set for $[N]$ if the equation $m=a_i-a_j$ has at least $g$ solutions in $A$ for every $m \in [N]$.  What can we say about the case where we replace $a_i-a_j$ with a different homogeneous polynomial?  For instance, if we use the equation $5a_i-3a_j$, perhaps there is a connection to functions $f: \mathbb{R} \to \mathbb{R}_{\geq 0}$ such that $\int_{\mathbb{R}} f(5t)f(x+3t) \, dt \geq 1$ for all $x \in [0,1]$.  Similarly, the equation $m=a_i-a_j-a_k$ should correspond to the three-fold convolution $\int_{\mathbb{R}}\int_{\mathbb{R}} f(t)f(s)f(x+s+t) \, ds \, dt$.  (It would also be interesting to generalize $g$-Sidon sets in this direction.)
\end{problem}
\begin{problem}
In determining if $A$ is a $g$-difference set for $[N]$, we check if the equation $m=a_i-a_j$ has at least $g$ solutions for every $m \in [N]$.  What happens if we want $m=a_i-a_j$ to have at least, for instance, $g\sqrt{N^2-m^2}$ solutions?  On the continuous side, what can we say about functions $f: \mathbb{R} \to \mathbb{R}_{\geq 0}$ such that $(f \star f)(x) \geq \sqrt{1-x^2}$ for all $x \in [0,1]$?
\end{problem}

\section*{Acknowledgements}
The author is grateful to Stefan Steinerberger for suggesting this problem and engaging in helpful discussions throughout the writing of this paper.

\end{document}